\documentclass[10pt]{article}

\usepackage{amsmath,amsthm}
\usepackage{amssymb}
\usepackage{hyperref}

\allowdisplaybreaks
 
\begin{document}

\title{Summation formulas of $q$-hyperharmonic numbers}
\author{
Takao Komatsu 
\\
\small Department of Mathematical Sciences, School of Science\\
\small Zhejiang Sci-Tech University\\
\small Hangzhou 310018 China\\
\small \texttt{komatsu@zstu.edu.cn}\\\\
Rusen Li\\
\small School of Mathematics\\
\small Shandong University\\
\small Jinan 250100 China\\
\small \texttt{limanjiashe@163.com}
}

\date{
}

\maketitle

\def\stf#1#2{\left[#1\atop#2\right]}
\def\sts#1#2{\left\{#1\atop#2\right\}}
\def\e{\mathfrak e}
\def\f{\mathfrak f}

\newtheorem{theorem}{Theorem}
\newtheorem{Prop}{Proposition}
\newtheorem{Cor}{Corollary}
\newtheorem{Lem}{Lemma}
\newtheorem{Example}{Example}
\newtheorem{Remark}{Remark}

\begin{abstract}
In this paper, several weighted summation formulas of $q$-hyperharmonic numbers are derived. As special cases, several formulas of hyperharmonic numbers of type $\sum_{\ell=1}^{n} {\ell}^{p} H_{\ell}^{(r)}$ and $\sum_{\ell=0}^{n} {\ell}^{p} H_{n-\ell}^{(r)}$ are obtained.  
\\
{\bf Keywords:} Hyperharmonic numbers, Stirling numbers, $q$-generalizations
\end{abstract}

\section{Introduction}

Spie\ss {\cite{Spiess}} gives some identities including the types of $\sum_{\ell=1}^n\ell^k H_\ell$, $\sum_{\ell=1}^n\ell^k H_{n-\ell}$ and $\sum_{\ell=1}^n\ell^k H_\ell H_{n-\ell}$.  In particular, explicit forms for $r=0,1,2,3$ are given.  
In this paper, several identities including $\sum_{\ell=1}^n\ell^k H_\ell^{(r)}$ and $\sum_{\ell=1}^n\ell^k H_{n-\ell}^{(r)}$ are shown as special cases of more general results, where $H_\ell^{(r)}$ denotes hyperharmonic numbers defined in (\ref{def:hyperharmonic}). When $r=1$, $H_n=H_n^{(1)}$ is the original harmonic number defined by $H_n=\sum_{j=1}^n 1/j$.  
This paper is also motivated from the summation $\sum_{\ell=1}^n\ell^k$, which is related to Bernoulli numbers.  In \cite{Adam}, Stirling numbers are represented via harmonic numbers and hypergeometric functions related to Euler sums. In this paper, the sums involving harmonic numbers and their $q$-generalizations are expressed by using Stirling numbers and their $q$-generalizations. 

There are many generalizations of harmonic numbers.  Furthermore, some $q$-generalizations of hyperharmonic numbers have been proposed. In this paper, based upon a certain type of $q$-harmonic numbers $H_n^{(r)}(q)$ defined in (\ref{def:qhyperharmonic}), several formulas of $q$-hyperharmonic numbers are also derived as $q$-generalizations.  These results are also motivated from the $q$-analogues of the sums of consecutive integers (\cite{GZ,Schlosser,Warnaar}).   


In order to consider the weighted summations, we are motivated by the fact that the sum of powers of consecutive integers $1^k+2^k+\cdots+n^k$ can be explicitly expressed in terms of Bernoulli numbers or Bernoulli polynomials. After seeing the sums of powers for small $k$:
$$
\sum_{\ell=1}^n\ell=\frac{n(n+1)}{2},~ \sum_{\ell=1}^n\ell^2=\frac{n(n+1)(2 n+1)}{6},~ \sum_{\ell=1}^n\ell^3=\left(\frac{n(n+1)}{2}\right)^2,~ \dots\,,
$$
the formula can be written as
\begin{align}
\sum_{\ell=1}^{n}\ell^{k}&=\frac{1}{k+1}\sum_{j=0}^k\binom{k+1}{j}B_j n^{k+1-j}\label{ber}\\
                       &=\frac{1}{k+1}(B_{k+1}(n+1)-B_{k+1}(1))\quad\hbox{\cite{CFZ}}\,,
\label{ber1}
\end{align}
where Bernoulli numbers $B_n$ are determined by the recurrence formula
$$
\sum_{j=0}^k\binom{k+1}{j}B_j=k+1\quad (k\ge 0)
$$
or by the generating function
$$
\frac{t}{1-e^{-t}}=\sum_{n=0}^\infty B_n\frac{t^n}{n!}\,,
$$
and  Bernoulli polynomials $B_n(x)$ are defined by the following generating function
$$
\frac{te^{xt}}{e^{t}-1}=\sum_{n=0}^\infty B_n(x)\frac{t^n}{n!}\,.
$$

If Bernoulli numbers $\mathfrak B_n$ are defined by
$$
\frac{t}{e^t-1}=\sum_{n=0}^\infty\mathfrak B_n\frac{t^n}{n!}\,,
$$
we can see that $B_n=(-1)^n\mathfrak B_n$.  Then
$$
\sum_{\ell=1}^{n}\ell^k=\frac{1}{k+1}\sum_{j=0}^k\binom{k+1}{j}(-1)^{j}\mathfrak B_j n^{k+1-j}\,.
$$
We recall the well-known Abel's identity, which is frequently used in the present paper.
\begin{Lem}{(Abel's identity)}
For any positive integer $n$,
$$
\sum_{\ell=1}^{n} a_{\ell}b_{\ell}=s_{n}b_{n}+\sum_{\ell=1}^{n-1} s_{\ell}(b_{\ell}-b_{\ell+1})\,.
\label{abel}
$$

where
$$
s_{n}=\sum_{\ell=1}^{n}a_{\ell}.
$$
\end{Lem}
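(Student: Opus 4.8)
The plan is to prove Abel's identity by the standard \emph{summation by parts} manoeuvre: express each $a_\ell$ as a difference of consecutive partial sums and then reindex. Concretely, adopt the convention $s_0=0$, so that $a_\ell=s_\ell-s_{\ell-1}$ for every $\ell\ge 1$. Substituting this into $\sum_{\ell=1}^n a_\ell b_\ell$ splits it as $\sum_{\ell=1}^n s_\ell b_\ell-\sum_{\ell=1}^n s_{\ell-1}b_\ell$.

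Next I would shift the summation index in the second sum by replacing $\ell$ with $\ell+1$, obtaining $\sum_{\ell=0}^{n-1}s_\ell b_{\ell+1}$; since $s_0=0$, the $\ell=0$ term drops out, leaving $\sum_{\ell=1}^{n-1}s_\ell b_{\ell+1}$. Peeling the $\ell=n$ term off the first sum gives $\sum_{\ell=1}^n s_\ell b_\ell=s_n b_n+\sum_{\ell=1}^{n-1}s_\ell b_\ell$. Combining the two pieces yields $\sum_{\ell=1}^n a_\ell b_\ell=s_n b_n+\sum_{\ell=1}^{n-1}s_\ell(b_\ell-b_{\ell+1})$, which is exactly the claimed formula.

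As an alternative (or a cross-check), the identity also follows by induction on $n$: the base case $n=1$ reads $a_1 b_1=s_1 b_1$, true because $s_1=a_1$; for the inductive step one adds $a_{n+1}b_{n+1}$ to both sides, uses $s_{n+1}=s_n+a_{n+1}$, and rearranges in one line to recover the statement with $n+1$ in place of $n$. There is no real obstacle here; the only thing requiring attention is the boundary bookkeeping—fixing the convention $s_0=0$ and isolating the $\ell=n$ term—which is precisely what makes the telescoping-type cancellation go through cleanly.
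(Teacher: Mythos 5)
Your proof is correct and complete: writing $a_\ell=s_\ell-s_{\ell-1}$ with the convention $s_0=0$, reindexing, and peeling off the $\ell=n$ term is the standard Abel summation-by-parts argument, and the boundary bookkeeping is handled properly. The paper itself gives no proof of this lemma (it is simply recalled as a well-known identity), so there is nothing to compare against; your derivation fills that in correctly, and the inductive cross-check is likewise valid.
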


In the weight of harmonic numbers $H_n$, we have the following formulas.

\begin{Prop}
For $n,k\ge 1$,
$$
\sum_{\ell=1}^n\ell^k H_\ell=\frac{H_n}{k+1}\sum_{j=0}^k\binom{k+1}{j}B_j n^{k+1-j}
-\sum_{\ell=1}^{n-1}(H_n-H_\ell)\ell^k\,.
$$
\end{Prop}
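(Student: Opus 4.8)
The plan is to invoke Abel's identity (Lemma~\ref{abel}) with the choice $a_\ell=\ell^k$ and $b_\ell=H_\ell$, so that $s_\ell=\sum_{m=1}^\ell m^k$ is the ordinary power sum. Since $b_\ell-b_{\ell+1}=H_\ell-H_{\ell+1}=-\tfrac1{\ell+1}$, Abel's identity immediately gives
$$
\sum_{\ell=1}^n\ell^k H_\ell=s_n H_n-\sum_{\ell=1}^{n-1}\frac{s_\ell}{\ell+1}\,.
$$
For the first term I would simply substitute the classical closed form \eqref{ber}, namely $s_n=\frac1{k+1}\sum_{j=0}^k\binom{k+1}{j}B_j n^{k+1-j}$; this already reproduces the leading term of the asserted formula.

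It then remains to show $\sum_{\ell=1}^{n-1}\frac{s_\ell}{\ell+1}=\sum_{\ell=1}^{n-1}(H_n-H_\ell)\ell^k$, which I would prove by interchanging the order of summation. Writing $s_\ell=\sum_{m=1}^\ell m^k$ and summing over $\ell$ first,
$$
\sum_{\ell=1}^{n-1}\frac{s_\ell}{\ell+1}=\sum_{m=1}^{n-1}m^k\sum_{\ell=m}^{n-1}\frac1{\ell+1}=\sum_{m=1}^{n-1}m^k\sum_{j=m+1}^{n}\frac1j=\sum_{m=1}^{n-1}m^k\bigl(H_n-H_m\bigr)\,,
$$
and plugging this back into the previous display yields exactly the claimed identity. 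Equivalently, one may expand $H_n-H_\ell=\sum_{j=\ell+1}^n 1/j$ in the right-hand side and swap the two sums; it is the same computation read backwards.

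This argument is essentially mechanical, so I do not expect a genuine obstacle; the only points needing a little care are the off-by-one shifts (the difference $H_\ell-H_{\ell+1}$, and the fact that Abel's identity produces a tail sum up to $n-1$ rather than $n$) and checking the degenerate case $n=1$, where both sides collapse to $H_1=1$ by the defining recurrence $\sum_{j=0}^k\binom{k+1}{j}B_j=k+1$. The same template --- Abel's identity followed by a summation swap --- is presumably what will drive the $q$-analogues and the $H_{n-\ell}^{(r)}$ variants later in the paper.
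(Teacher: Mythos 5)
Your proposal is correct and follows essentially the same route as the paper: apply Abel's identity with $a_\ell=\ell^k$, $b_\ell=H_\ell$ and substitute the Bernoulli closed form (\ref{ber}) for $s_n$. The only cosmetic difference is in rewriting the remainder term, where you use $H_\ell-H_{\ell+1}=-\tfrac{1}{\ell+1}$ and swap the order of summation, while the paper regroups the telescoping sum $\sum_{\ell=1}^{n-1}s_\ell(H_\ell-H_{\ell+1})$ directly; both give $-\sum_{\ell=1}^{n-1}(H_n-H_\ell)\ell^k$.
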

\begin{proof}
Set $a_{\ell}={\ell^k}$ and $b_{\ell}={H_\ell}$ in Lemma \ref{abel}. With
\begin{align*}
\sum_{\ell=1}^{n-1} s_{\ell}(H_{\ell}-H_{\ell+1})&=s_{1}(H_{1}-H_{2})+\cdots +s_{n-1}(H_{n-1}-H_{n})\\
&=1^kH_{1}+\cdots +{(n-1)}^{k}H_{n-1}-s_{n-1}H_{n}\\
&=-\sum_{\ell=1}^{n-1}(H_n-H_{\ell})\ell^k\,,
\end{align*}
formula (\ref{ber}) gives the result.
\end{proof}

\begin{Prop}
For $n,k\ge 1$,
$$
\sum_{\ell=1}^n\ell^k H_\ell=\frac{H_n}{k+1}(B_{k+1}(n+1)-B_{k+1}(1))
-\sum_{\ell=1}^{n-1}\frac{B_{k+1}(\ell+1)-B_{k+1}(1)}{(k+1)(\ell+1)}\,.
$$
\end{Prop}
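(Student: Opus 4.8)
The plan is to obtain this formula as an immediate reformulation of Proposition 1, processing its two summands separately. For the leading term I would use that the two displayed expressions (\ref{ber}) and (\ref{ber1}) are equal: the factor $\frac{1}{k+1}\sum_{j=0}^k\binom{k+1}{j}B_j n^{k+1-j}$ appearing in Proposition 1 is exactly $\frac{1}{k+1}\bigl(B_{k+1}(n+1)-B_{k+1}(1)\bigr)$, so the term $\frac{H_n}{k+1}\sum_{j=0}^k\binom{k+1}{j}B_j n^{k+1-j}$ is rewritten as $\frac{H_n}{k+1}\bigl(B_{k+1}(n+1)-B_{k+1}(1)\bigr)$ with no extra work.

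The substance lies in rewriting the correction term $\sum_{\ell=1}^{n-1}(H_n-H_\ell)\ell^k$. First I would write $H_n-H_\ell=\sum_{m=\ell+1}^{n}\frac1m$ and interchange the order of summation: the index pairs $(\ell,m)$ with $1\le\ell\le n-1$ and $\ell+1\le m\le n$ are precisely those with $\ell<m$ and $2\le m\le n$, so
$$
\sum_{\ell=1}^{n-1}(H_n-H_\ell)\ell^k=\sum_{m=2}^{n}\frac1m\sum_{\ell=1}^{m-1}\ell^k\,.
$$
Next I would apply (\ref{ber1}) with $n$ replaced by $m-1$, giving $\sum_{\ell=1}^{m-1}\ell^k=\frac{1}{k+1}\bigl(B_{k+1}(m)-B_{k+1}(1)\bigr)$, and finally reindex by $m=\ell+1$ to obtain $\sum_{\ell=1}^{n-1}\frac{B_{k+1}(\ell+1)-B_{k+1}(1)}{(k+1)(\ell+1)}$. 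Subtracting this from the leading term yields the claimed identity.

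The only point that calls for any care — and it is genuinely minor — is the bookkeeping in the interchange of summations and the index shift, together with the degenerate endpoint $m=1$ (equivalently $\ell=0$), at which both $\sum_{\ell=1}^{0}\ell^k$ and $B_{k+1}(1)-B_{k+1}(1)$ vanish, so its inclusion or omission is immaterial. I do not anticipate any genuinely hard step: the proposition is essentially a re-packaging of Proposition 1 through the polynomial form (\ref{ber1}) of the power-sum formula.
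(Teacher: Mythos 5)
Your proposal is correct, but it reaches the identity by a slightly different route than the paper. The paper's proof is a one-liner: apply Abel's identity (Lemma 1) again with $a_\ell=\ell^k$, $b_\ell=H_\ell$, note $H_\ell-H_{\ell+1}=-\tfrac{1}{\ell+1}$, and substitute (\ref{ber1}) for the partial sums $s_\ell=\sum_{j=1}^{\ell}j^k$, which immediately produces both the leading term and the correction sum. You instead take Proposition 1 as the starting point and show that its correction term $\sum_{\ell=1}^{n-1}(H_n-H_\ell)\ell^k$ equals $\sum_{\ell=1}^{n-1}\frac{B_{k+1}(\ell+1)-B_{k+1}(1)}{(k+1)(\ell+1)}$ by expanding $H_n-H_\ell$, interchanging the double sum, and applying (\ref{ber1}) to the inner power sum. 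The content is essentially the same --- your interchange is exactly the partial-summation bookkeeping that Abel's identity packages --- so neither argument is more powerful; what your version buys is an explicit demonstration that Propositions 1 and 2 are reformulations of one another, at the cost of being longer than the paper's direct application of the lemma. Your handling of the index range and the degenerate endpoint is fine, and there is no gap.
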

\begin{proof}
Set $a_{\ell}={\ell^k}$ and $b_{\ell}={H_\ell}$ in 
Lemma \ref{abel}. Formula (\ref{ber1}) gives the result.
\end{proof}

\section{Weighted summations of $q$-hyperharmonic numbers}

Many types of $q$-generalizations have been studied for harmonic numbers (e.g.,\cite{KL,WG}). In this paper, a $q$-hyperharmonic number $H_n^{(r)}(q)$ (see  \cite{MS}) is defined by
\begin{equation}
H_n^{(r)}(q)=\sum_{j=1}^n q^j H_j^{(r-1)}(q)\quad(r,n\ge 1)
\label{def:qhyperharmonic}
\end{equation}
with
$$
H_n^{(0)}(q)=\frac{1}{q [n]_q}
$$
and
$$
[n]_q=\frac{1-q^n}{1-q}\,.
$$
Note that
$$
\lim_{q\to 1}[n]_q=n\,.
$$
In this $q$-generalization,
$$ 
H_n(q)=H_n^{(1)}(q)=\sum_{j=1}^n\frac{q^{j-1}}{[j]_q}
$$ 
is a $q$-harmonic number.  When $q\to 1$, $H_n=\lim_{q\to 1}H_n(q)$ is the original harmonic number and $H_n^{(r)}=\lim_{q\to 1}H_n^{(r)}(q)$ is the $r$-th order hyperharmonic number, defined by
\begin{equation}
H_n^{(r)}=\sum_{\ell=1}^n H_\ell^{(r-1)}\quad\hbox{with}\quad H_n^{(1)}=H_n\,.
\label{def:hyperharmonic}  
\end{equation}

Mansour and Shattuck \cite[Identity 4.1, Proposition 3.1]{MS} give the following identities
\begin{align}
H_n^{(r)}(q)&=\binom{n+r-1}{r-1}_q\bigl(H_{n+r-1}(q)-H_{r-1}(q)\bigr)
\label{ph01}\\
&=\sum_{j=1}^n\binom{n+r-j-1}{r-1}_q\frac{q^{r j-1}}{[j]_q},
\label{ph02}
\end{align}
where
$$
\binom{n}{k}_q=\frac{[n]_q!}{[k]_q![n-k]_q!}
$$
is a $q$-binomial coefficient with $q$-factorials $[n]_q!=[n]_q[n-1]_q\cdots[1]_q$.
Note that the identities (\ref{ph01}) and (\ref{ph02}) are $q$-generalization of the identities (\ref{h01}) and (\ref{h02}), respectively.  
\begin{align}
H_n^{(r)}&=\binom{n+r-1}{r-1}(H_{n+r-1}-H_{r-1})\quad\hbox{\cite{Conway}}
\label{h01}\\
&=\sum_{j=1}^n\binom{n+r-j-1}{r-1}\frac{1}{j}\quad\hbox{\cite{BGG}}\,. 
\label{h02}
\end{align} 
So, we can see the recurrence relation 
for $r\ge 1$
$$ 
H_n^{(r+1)}=\frac{n+r}{r}H_n^{(r)}-\frac{1}{r}\binom{n+r-1}{r}\,.
$$

The generating function of this type of $q$-hyperharmonic numbers is given by
\begin{equation}
\sum_{n=1}^\infty H_n^{(r)}(q)z^n=\frac{-\log_q(1-q^r z)}{q(z;q)_r}\quad(r\ge 0)
\label{gen:qhyperharmo}
\end{equation}
(\cite[Theorem 3.2]{MS}), where
$$
-\log_q(1-t)=\sum_{m=1}^\infty\frac{t^m}{[m]_q}
$$
is the $q$-logarithm function and
$$
(z;q)_k:=\prod_{j=0}^{k-1}(1-z q^j)
$$
is the $q$-Pochhammer symbol.   When $q\to 1$, (\ref{gen:qhyperharmo}) is reduced to the generating function of hyperharmonic numbers:
$$
\sum_{n=1}^\infty H_n^{(r)}z^n=\frac{-\log(1-z)}{(1-z)^r}\quad(r\ge 0)\,.
$$
In fact, the same form is given by Knuth \cite{Knuth} as
$$
\sum_{n=r-1}^\infty\binom{n}{r-1}(H_n-H_{r-1})z^{n-r+1}=\frac{-\log(1-z)}{(1-z)^r}\quad(r\ge 0)\,.
$$

By (\ref{ph01}), we have
$$
H_n^{(r+1)}(q)-\frac{[n+r]_q}{[r]_q}H_n^{(r)}(q)
=-\frac{q^{r-1}}{[r]_q}\binom{n+r-1}{r}_q\,.
$$
Hence, 
\begin{equation}
H_n^{(r+1)}(q)=\frac{[n+r]_q}{[r]_q}H_n^{(r)}(q)-\frac{q^{r-1}}{[r]_q}\binom{n+r-1}{r}_q\,.
\label{qh11}
\end{equation}
By replacing $n$ by $n+1$ and $r$ by $r-1$ in (\ref{qh11}), together with the definition in (\ref{def:qhyperharmonic}), we have
\begin{align}
&[n+r]_q H_n^{(r)}(q)\notag\\
&=[n+1]_q H_{n+1}^{(r)}(q)-q^{n+r-1}\binom{n+r-1}{r-1}_q\,.
\label{qh13}
\end{align}


Mansour and Shattuck {\cite[Theorem 3.3]{MS}} also give the following formula,
\begin{equation}
H_n^{(r)}(q)=\sum_{j=1}^n q^{j(r-m)}\binom{n+r-m-j-1}{r-m-1}H_j^{(m)}(q).
\label{ph022}
\end{equation}
When $q\to 1$, (\ref{ph022}) is reduced to  
$$ 
H_n^{(r)}=\sum_{j=1}^n\binom{n+r-m-j-1}{r-m-1}H_j^{(m)}
$$ 
(see also \cite{BGG},\cite[2.4.Theorem]{BS}). When $m=0$, (\ref{ph022}) is reduced to (\ref{ph02}).

We prove a more general result of (\ref{ph01}).

\begin{theorem}
For nonnegative integers $n$ and $k$ and a positive integer $r$, we have
$$
\binom{k+r-1}{k}_q H_n^{(k+r)}(q)=\binom{n+k}{n}_q H_{n+k}^{(r)}(q)-\binom{n+k+r-1}{n}_q H_k^{(r)}(q)\,.
$$
\label{gph01}
\end{theorem}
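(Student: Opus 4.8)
The plan is to reduce the statement to the Mansour--Shattuck identity (\ref{ph01}): express each of the three $q$-hyperharmonic numbers in terms of ordinary $q$-harmonic numbers, and then watch the $q$-harmonic contributions cancel once a single $q$-binomial identity has been checked.

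Concretely, I would apply (\ref{ph01}) three times: to $H_n^{(k+r)}(q)$ (order $k+r$, index $n$), to $H_{n+k}^{(r)}(q)$ (order $r$, index $n+k$), and to $H_k^{(r)}(q)$ (order $r$, index $k$). This turns the claim into an identity whose only ``harmonic'' ingredients are $H_{n+k+r-1}(q)$, $H_{k+r-1}(q)$ and $H_{r-1}(q)$, and whose three terms carry the $q$-binomial prefactors
$$\binom{k+r-1}{k}_q\binom{n+k+r-1}{k+r-1}_q,\qquad\binom{n+k}{n}_q\binom{n+k+r-1}{r-1}_q,\qquad\binom{n+k+r-1}{n}_q\binom{k+r-1}{r-1}_q\,,$$
respectively.

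The one genuine computation is to expand these three products via $\binom{a}{b}_q=[a]_q!/\bigl([b]_q!\,[a-b]_q!\bigr)$ together with $\binom{a}{b}_q=\binom{a}{a-b}_q$; all three collapse to the common symmetric value $[n+k+r-1]_q!/\bigl([n]_q!\,[k]_q!\,[r-1]_q!\bigr)$. Pulling out this common prefactor, the theorem is then the tautology
$$H_{n+k+r-1}(q)-H_{k+r-1}(q)=\bigl(H_{n+k+r-1}(q)-H_{r-1}(q)\bigr)-\bigl(H_{k+r-1}(q)-H_{r-1}(q)\bigr)\,,$$
which holds termwise; note that no division is involved, so the argument is valid for every $q$. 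Finally I would address the degenerate indices $n=0$ and $k=0$ by noting that (\ref{ph01}) already holds there (both sides vanish, using $H_0^{(s)}(q)=0$ and $H_0(q)=0$), so the reduction above goes through verbatim.

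There is essentially no obstacle: the theorem is a repackaging of (\ref{ph01}). The only points needing care are the $q$-factorial bookkeeping that forces the three $q$-binomial products to coincide, and checking that (\ref{ph01}) is legitimately applicable with the shifted parameters $k+r$ and $r$ and the shifted indices $n+k$ and $k$, including the boundary cases $r=1$ or a zero index. An alternative but heavier route would be induction on $n$ using the recurrence (\ref{qh13}) (or on $k$), but the direct reduction to (\ref{ph01}) is much cleaner.
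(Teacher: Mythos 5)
Your proposal is correct, and I verified the key computation: expanding the three $q$-hyperharmonic numbers via (\ref{ph01}) produces the prefactors $\binom{k+r-1}{k}_q\binom{n+k+r-1}{k+r-1}_q$, $\binom{n+k}{n}_q\binom{n+k+r-1}{r-1}_q$ and $\binom{n+k+r-1}{n}_q\binom{k+r-1}{r-1}_q$, all three of which indeed collapse to $[n+k+r-1]_q!\,/\,\bigl([n]_q!\,[k]_q!\,[r-1]_q!\bigr)$, after which the identity is the telescoping triviality you state; the boundary cases $n=0$, $k=0$, $r=1$ are handled correctly since $H_0^{(s)}(q)=0$. However, your route is genuinely different from the paper's: the paper proves Theorem \ref{gph01} by induction on $k$, using the recurrences (\ref{qh11}) and (\ref{qh13}) (applied with shifted parameters) together with the relation $[n+k+r]_q-q^n[k+r]_q=[n]_q$ to pass from $k$ to $k+1$, which requires a fairly long chain of $q$-binomial manipulations. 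Your direct reduction is shorter and makes the logical situation transparent: since (\ref{qh11}) and (\ref{qh13}) are themselves consequences of (\ref{ph01}), both arguments ultimately rest on the Mansour--Shattuck identity, so there is no circularity in using it directly; moreover, combined with the paper's Remark (that the case $r=1$, $k\mapsto r-1$ recovers (\ref{ph01})), your argument exhibits Theorem \ref{gph01} as being \emph{equivalent} to (\ref{ph01}) rather than a strictly stronger statement. What the paper's inductive proof buys instead is a self-contained demonstration of the recurrence machinery ((\ref{qh11}), (\ref{qh13})) that is reused elsewhere in the paper, but as a proof of this theorem alone your approach is cleaner.
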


\noindent
{\it Remark.}
If $r=1$ and $k$ is replaced by $r-1$ in Theorem \ref{gph01}, we have the identity (\ref{ph01}).
If $q\to 1$ in Theorem \ref{gph01}, we have the version of the original hyperharmonic numbers in \cite[Theorem 1]{MD}.

\begin{proof}[Proof of Theorem \ref{gph01}.]
The proof is done by induction on $k$.  When $k=0$, the identity is clear since both sides are equal to $H_n^{(r)}(q)$.  Assume, then, that the identity has been proved for $0,1,\cdots,k$. We give some explanations for the following calculation. Firstly, by replacing $r$ by $k+r$ in (\ref{qh11}), we get the first identity. Secondly, by using the inductive assumption, we get the second identity. Thirdly, by replacing $n$ by $n+k$ and $n$ by $k$ respectively in (\ref{qh13}), we get the third identity. Then, we have
\begin{align*}
&\binom{k+r}{k+1}_q H_n^{(k+r+1)}(q)\\
&=\binom{k+r}{k+1}_q\frac{[n+k+r]_q}{[k+r]_q}H_n^{(k+r)}(q)-\binom{k+r}{k+1}_q\frac{q^{k+r-1}}{[k+r]_q}\binom{n+k+r-1}{k+r}_q\\
&=\frac{[n+k+r]_q}{[k+1]_q}\binom{n+k}{n}_q H_{n+k}^{(r)}(q)-\frac{[n+k+r]_q}{[k+1]_q}\binom{n+k+r-1}{n}_q H_k^{(r)}(q)\\
&\quad -\frac{q^{k+r-1}}{[k+r]_q}\binom{k+r}{k+1}_q\binom{n+k+r-1}{k+r}_q\\
&=\frac{[n+k+1]_q}{[k+1]_q}\binom{n+k}{n}_q H_{n+k+1}^{(r)}(q)-\frac{q^{n+k+r-1}}{[k+1]_q}\binom{n+k}{n}_q\binom{n+k+r-1}{r-1}_q\\
&\quad -\frac{[n+k+r]_q}{[k+1]_q}\binom{n+k+r-1}{n}_q\frac{[k+1]_q}{[k+r]_q}H_{k+1}^{(r)}(q)\\
&\quad +\frac{[n+k+r]_q}{[k+1]_q}\binom{n+k+r-1}{n}_q\frac{q^{k+r-1}}{[k+r]_q}\binom{k+r-1}{r-1}_q\\
&\quad -\frac{q^{k+r-1}}{[k+r]_q}\binom{k+r}{k+1}_q\binom{n+k+r-1}{k+r}_q\\
&=\binom{n+k+1}{n}_q H_{n+k+1}^{(r)}(q)-\binom{n+k+r}{n}_q H_{k+1}^{(r)}(q)\,.
\end{align*}
We used the relation $[n+k+r]_q-q^n[k+r]_q=[n]_q$ in the final part.
\end{proof}

Cereceda \cite{Cereceda} gives the following formula,
$$
\lim_{n\to\infty}\frac{H_{n+1}^{(n+1)}}{H_n^{(n)}}=4\,.
$$
However, the ratio of $q$-hyperharmonic numbers of type $H_n^{(n)}(q)$ has a different phenomenon.

\begin{Prop}
For $|q|<1$, we have
$$
\lim_{n\to\infty}\frac{H_{n+1}^{(n+1)}(q)}{H_n^{(n)}(q)}=q\,.
$$
\label{ratio:ph01}
\end{Prop}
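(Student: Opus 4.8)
The plan is to derive an explicit asymptotic for $H_n^{(n)}(q)$ as $n\to\infty$ and then take the ratio. The natural starting point is the closed form (\ref{ph01}), which with $r=n$ gives
$$
H_n^{(n)}(q)=\binom{2n-1}{n-1}_q\bigl(H_{2n-1}(q)-H_{n-1}(q)\bigr)\,.
$$
Thus the ratio splits as a product of two pieces:
$$
\frac{H_{n+1}^{(n+1)}(q)}{H_n^{(n)}(q)}
=\frac{\binom{2n+1}{n}_q}{\binom{2n-1}{n-1}_q}\cdot
\frac{H_{2n+1}(q)-H_n(q)}{H_{2n-1}(q)-H_{n-1}(q)}\,,
$$
and I would analyze the two factors separately.

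For the $q$-binomial factor, I would write it out in terms of $q$-integers: $\binom{2n+1}{n}_q/\binom{2n-1}{n-1}_q = \frac{[2n+1]_q[2n]_q}{[n+1]_q[n]_q}$. Since $|q|<1$ we have $[m]_q=\frac{1-q^m}{1-q}\to\frac{1}{1-q}$ as $m\to\infty$, so each of $[2n+1]_q,[2n]_q,[n+1]_q,[n]_q$ tends to $\frac{1}{1-q}$; hence this whole factor tends to $\frac{(1-q)^{-2}}{(1-q)^{-2}}=1$. For the harmonic-difference factor, I would use (\ref{ph02}) with $r=1$, or more directly the definition $H_m(q)=\sum_{j=1}^m q^{j-1}/[j]_q$, to see that $H_\infty(q):=\sum_{j=1}^\infty q^{j-1}/[j]_q$ converges absolutely when $|q|<1$ (the terms are $O(|q|^{j-1}\cdot(1-q))$). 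Then $H_{2n+1}(q)-H_n(q)=\sum_{j=n+1}^{2n+1}q^{j-1}/[j]_q$, which is a tail sum starting at index $n+1$; factoring out $q^n$ one gets $H_{2n+1}(q)-H_n(q)=q^n\bigl(c+o(1)\bigr)$ where $c=\sum_{i=1}^{\infty}q^{i-1}/[n+i]_q\to (1-q)\sum_{i=1}^\infty q^{i-1}=1$ — more carefully, $[n+i]_q=\frac{1-q^{n+i}}{1-q}\to\frac1{1-q}$ uniformly, so $H_{2n+1}(q)-H_n(q)\sim q^n\cdot(1-q)\cdot\frac{1}{1-q}\cdot\frac{1}{1}$... the cleanest route is to show directly $H_{2n+1}(q)-H_n(q)=q^n(1-q)+O(q^{2n})$ by keeping only the $j=n+1$ term and bounding the rest. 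Similarly $H_{2n-1}(q)-H_{n-1}(q)=q^{n-1}(1-q)+O(q^{2n})$. Dividing, the harmonic factor tends to $q^n/q^{n-1}=q$.

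Combining, the product of the two factors tends to $1\cdot q=q$, which is the claim. The main obstacle — really the only subtle point — is making the tail estimate for $H_{2n+1}(q)-H_n(q)$ rigorous and getting the power of $q$ exactly right, i.e., isolating the leading term $q^n/[n+1]_q\sim q^n(1-q)$ and showing the remaining terms contribute a strictly lower-order (by a factor $q$) amount uniformly in $n$; once the leading behavior of numerator and denominator is pinned down as $q^n(1-q)(1+o(1))$ and $q^{n-1}(1-q)(1+o(1))$ respectively, the limit $q$ drops out immediately. I should also note that $q$ is presumably taken to be a positive real (or at least that $q^n\neq 0$ and these expressions are nonzero for large $n$), so that dividing is legitimate; for complex $q$ with $0<|q|<1$ the same estimates go through with absolute-value bounds on the error terms.
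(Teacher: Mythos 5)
Your overall route is the same as the paper's: apply (\ref{ph01}) with $r=n$ and $r=n+1$, split the ratio into the $q$-binomial factor $\frac{[2n+1]_q[2n]_q}{[n+1]_q[n]_q}\to 1$ and the harmonic-difference factor, and show the latter tends to $q$. The first factor is handled correctly. The problem is in the step you designate as ``the cleanest route'': the estimate $H_{2n+1}(q)-H_n(q)=q^n(1-q)+O(q^{2n})$, obtained by keeping only the $j=n+1$ term and bounding the rest, is false, and the underlying strategy fails. The discarded terms are
$$
\sum_{j=n+2}^{2n+1}\frac{q^{j-1}}{[j]_q}=(1-q)\sum_{j=n+2}^{2n+1}\frac{q^{j-1}}{1-q^{j}}\sim q^{n+1}\,,
$$
which is of the \emph{same} order in $n$ as the retained term $q^n/[n+1]_q\sim q^n(1-q)$: the ratio of the remainder to the leading term tends to $q/(1-q)$, not to $0$ (and is large for $q$ near $1$). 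So the remainder is not ``lower order by a factor $q$ uniformly in $n$,'' and it is certainly not $O(q^{2n})$. The correct leading behaviour comes from summing the whole block: since $1/[j]_q=(1-q)\bigl(1+O(q^j)\bigr)$,
$$
H_{2n+1}(q)-H_n(q)=(1-q)\sum_{j=n+1}^{2n+1}q^{j-1}+O(q^{2n})=q^{n}+O(q^{2n})\,,
$$
and similarly $H_{2n-1}(q)-H_{n-1}(q)=q^{n-1}+O(q^{2n-2})$, whence the factor tends to $q$.

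In other words, the rigorous version of this step is exactly the argument you sketched earlier and then abandoned (factor out $q^n$, resp.\ $q^{n-1}$, and note that both bracketed sums $\sum_i q^i/[n+1+i]_q$ and $\sum_i q^i/[n+i]_q$ tend to the same limit $1$, since $1/[m]_q\to 1-q$ uniformly); that is precisely how the paper argues, writing the ratio as $q$ times a quotient of two such sums. Note also that your slip is harmless to the final answer only by luck: the spurious constant $(1-q)$ would have appeared in both numerator and denominator and cancelled, so the limit $q$ still comes out, but the asymptotic you propose to prove is simply not true and cannot serve as the backbone of the proof. Your closing remark about complex $q$ with $0<|q|<1$ is fine: the denominator is $q^{n-1}\bigl(1+o(1)\bigr)\ne 0$ for large $n$, and all error bounds can be taken in absolute value.
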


\begin{proof}
Since
$$
\frac{(1-q^{2 n+1})(1-q^{2 n})}{(1-q^{n+1})(1-q^{n})}\to 1\quad(|q|<1,~n\to\infty)
$$
and
\begin{align*}
\frac{H_{2 n+1}(q)-H_n(q)}{H_{2 n-1}(q)-H_{n-1}(q)}
&=\frac{q\left(\dfrac{1}{[n+1]_q}+\dfrac{q}{[n+2]_q}+\cdots+\dfrac{q^n}{[2 n+1]_q}\right)}{\dfrac{1}{[n]_q}+\dfrac{q}{[n+1]_q}+\cdots+\dfrac{q^{n-1}}{[2 n-1]_q}}\\
&\to q\quad(n\to\infty)\,,
\end{align*}
from (\ref{ph01}),
\begin{align*}
\frac{H_{n+1}^{(n+1)}(q)}{H_n^{(n)}(q)}&=\frac{\binom{2 n+1}{n}_q\bigl(H_{2 n+1}(q)-H_n(q)\bigr)}{\binom{2 n-1}{n-1}_q\bigl(H_{2 n-1}(q)-H_{n-1}(q)\bigr)}\\
&\to 1\cdot q=q\,.
\end{align*}
\end{proof}

\begin{theorem}
For positive integers $n$ and $r$,
\begin{align}
\sum_{\ell=1}^n q^{\ell-1}[\ell]_q H_\ell^{(r)}(q)&=\frac{[n]_q[n+r]_q}{[r+1]_q}H_n^{(r)}(q)-\frac{q^r[n-1]_q[n]_q}{([r+1]_q)^2}\binom{n+r-1}{r-1}_q\notag\\
&=\frac{[n]_q[r]_q}{[r+1]_q}H_n^{(r+1)}(q)+\frac{q^{r-1}}{[r+1]_q}\binom{n+r}{r+1}_q\,.
\label{eq:hq1}
\end{align}
\label{th:q110}
\end{theorem}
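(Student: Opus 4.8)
The plan is to apply Abel's identity (Lemma~\ref{abel}) with the two factors arranged so that the partial sums $s_\ell$ are themselves $q$-hyperharmonic numbers. Concretely, I would take $a_\ell=q^\ell H_\ell^{(r)}(q)$ and $b_\ell=q^{-1}[\ell]_q$, so that $a_\ell b_\ell=q^{\ell-1}[\ell]_q H_\ell^{(r)}(q)$ is exactly the left-hand summand, while by the definition~(\ref{def:qhyperharmonic}) (with $r$ replaced by $r+1$) we have $s_\ell=\sum_{j=1}^\ell q^j H_j^{(r)}(q)=H_\ell^{(r+1)}(q)$. Since $b_\ell-b_{\ell+1}=q^{-1}([\ell]_q-[\ell+1]_q)=-q^{\ell-1}$, Abel's identity gives
$$
\sum_{\ell=1}^n q^{\ell-1}[\ell]_q H_\ell^{(r)}(q)=q^{-1}[n]_q H_n^{(r+1)}(q)-\sum_{\ell=1}^{n-1}q^{\ell-1}H_\ell^{(r+1)}(q)=q^{-1}[n]_q H_n^{(r+1)}(q)-q^{-1}H_{n-1}^{(r+2)}(q),
$$
where the last equality is again the definition~(\ref{def:qhyperharmonic}).

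Next I would peel off the top term of $H_n^{(r+2)}(q)=\sum_{j=1}^n q^j H_j^{(r+1)}(q)$ to write $H_{n-1}^{(r+2)}(q)=H_n^{(r+2)}(q)-q^n H_n^{(r+1)}(q)$; using $[n]_q+q^n=[n+1]_q$ this turns the right-hand side into $q^{-1}\bigl([n+1]_q H_n^{(r+1)}(q)-H_n^{(r+2)}(q)\bigr)$. Substituting the recurrence~(\ref{qh11}) with $r$ replaced by $r+1$, namely $H_n^{(r+2)}(q)=\frac{[n+r+1]_q}{[r+1]_q}H_n^{(r+1)}(q)-\frac{q^r}{[r+1]_q}\binom{n+r}{r+1}_q$, leaves a multiple of $H_n^{(r+1)}(q)$ plus a $q$-binomial term. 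The coefficient of $H_n^{(r+1)}(q)$ is $\frac{[n+1]_q[r+1]_q-[n+r+1]_q}{q[r+1]_q}$, whose numerator equals $q[n]_q[r]_q$ by the elementary identities $[n+r+1]_q=[n+1]_q+q^{n+1}[r]_q$, $[r+1]_q-1=q[r]_q$ and $[n+1]_q-q^n=[n]_q$. This yields the second displayed form $\frac{[n]_q[r]_q}{[r+1]_q}H_n^{(r+1)}(q)+\frac{q^{r-1}}{[r+1]_q}\binom{n+r}{r+1}_q$.

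Finally, to reach the first displayed form I would feed the recurrence~(\ref{qh11}) (this time with $r$ unchanged), $H_n^{(r+1)}(q)=\frac{[n+r]_q}{[r]_q}H_n^{(r)}(q)-\frac{q^{r-1}}{[r]_q}\binom{n+r-1}{r}_q$, into the second form; the coefficient of $H_n^{(r)}(q)$ becomes $\frac{[n]_q[n+r]_q}{[r+1]_q}$ immediately, and it remains to verify
$$
\frac{q^{r-1}}{[r+1]_q}\binom{n+r}{r+1}_q-\frac{q^{r-1}[n]_q}{[r+1]_q}\binom{n+r-1}{r}_q=-\frac{q^r[n-1]_q[n]_q}{([r+1]_q)^2}\binom{n+r-1}{r-1}_q.
$$
Using $\binom{n+r-1}{r}_q=\frac{[n]_q}{[r]_q}\binom{n+r-1}{r-1}_q$ and $\binom{n+r}{r+1}_q=\frac{[n]_q[n+r]_q}{[r]_q[r+1]_q}\binom{n+r-1}{r-1}_q$, the left side becomes $\frac{q^{r-1}[n]_q}{[r]_q[r+1]_q}\binom{n+r-1}{r-1}_q\bigl(\frac{[n+r]_q}{[r+1]_q}-[n]_q\bigr)$, and since $[n+r]_q-[n]_q[r+1]_q=-q[n-1]_q[r]_q$ (from $[n+r]_q=[n]_q+q^n[r]_q$ together with $q^{n-1}-[n]_q=-[n-1]_q$) this collapses to the right side. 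The only genuine idea is the choice of $a_\ell,b_\ell$ that makes $s_\ell$ a $q$-hyperharmonic number by the very recurrence~(\ref{def:qhyperharmonic}), together with the observation that the residual sum $\sum_{\ell=1}^{n-1}q^{\ell-1}H_\ell^{(r+1)}(q)$ is again one such number; everything after is routine $q$-arithmetic, where the main obstacle is simply keeping the exponents of $q$ and the signs straight through the two applications of~(\ref{qh11}).
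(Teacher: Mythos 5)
Your argument is correct, but it is not the route the paper takes. The paper also starts from Abel's identity (Lemma \ref{abel}), but with the opposite assignment of the two factors: it sets $a_{\ell}=q^{\ell-1}\binom{\ell+r-1}{r}_q$ and $b_{\ell}=H_{\ell+r-1}(q)$, after first rewriting $[\ell]_q H_{\ell}^{(r)}(q)$ through the Mansour--Shattuck closed form (\ref{ph01}) and the identity $[\ell]_q\binom{\ell+r-1}{r-1}_q=[r]_q\binom{\ell+r-1}{r}_q$; the resulting $q$-binomial sums telescope and (\ref{ph01}) is used again to return to $H_n^{(r)}(q)$. You instead put the $q$-hyperharmonic numbers into $a_{\ell}$, so that by the defining recurrence (\ref{def:qhyperharmonic}) the partial sums are $H_{\ell}^{(r+1)}(q)$ and the residual Abel sum is $q^{-1}H_{n-1}^{(r+2)}(q)$; you then descend in the order parameter by two applications of (\ref{qh11}). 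I checked the $q$-arithmetic ($[n+1]_q[r+1]_q-[n+r+1]_q=q[n]_q[r]_q$, $[n+r]_q-[n]_q[r+1]_q=-q[n-1]_q[r]_q$, and the two $q$-binomial ratios) and it is all right, so the proof is complete. What each approach buys: yours avoids (\ref{ph01}) and any $q$-binomial summation, needs only the definition plus the recurrence (\ref{qh11}), and naturally produces the second displayed form first (it is in spirit close to how the paper later handles the backward sums in Theorem \ref{qbhh1}, where partial sums again become higher-order $q$-hyperharmonic numbers); the paper's choice of $a_{\ell},b_{\ell}$ is the one that scales to higher weights, since the same computation with $\binom{\ell}{p}_q$ in place of $[\ell]_q$ is exactly the evaluation of $F_q(n,p)$ in (\ref{ph06}) that drives the general Theorem \ref{qhypersturc1}, whereas in your scheme the residual sums acquire weights and do not stay within the family so cleanly.
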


\begin{proof}
Set $a_{\ell}=q^{\ell-1}\binom{\ell+r-1}{r}_q$ and $b_{\ell}=H_{\ell+r-1}(q)$. By using Lemma \ref{abel}, we have
\begin{align}
&\sum_{\ell=1}^{n}q^{\ell-1}\binom{\ell+r-1}{r}_q H_{\ell+r-1}(q)\notag\\ 
&=\sum_{\ell=1}^{n}q^{\ell-1}\binom{\ell+r-1}{r}_q H_{n+r-1}(q)-\sum_{\ell=1}^{n-1}\frac{q^{\ell+r-1}}{[\ell+r]_q}\binom{\ell+r}{r+1}_q\notag\\
&=\binom{n+r}{r+1}_q H_{n+r-1}(q)-\frac{q^r}{[r+1]_q}\binom{n+r-1}{r+1}_q\,.
\label{ph04}
\end{align}
Hence, 
\begin{align}
&\sum_{\ell=1}^{n}q^{\ell-1}{[\ell]_q} H_{\ell}^{(r)}(q)\notag\\ 
&=\sum_{\ell=1}^{n}q^{\ell-1}{[\ell]_q} \binom{\ell+r-1}{r-1}_q (H_{\ell+r-1}(q)-H_{r-1}(q))\notag\\
&=[r]_q \sum_{\ell=1}^{n} q^{\ell-1} \binom{\ell+r-1}{r}_q (H_{\ell+r-1}(q)-H_{r-1}(q))\notag\\
&=[r]_q \sum_{\ell=1}^{n}  \binom{\ell+r-1}{r}_q H_{\ell+r-1}(q)-[r]_q H_{r-1}(q)\binom{n+r}{r+1}_q\,.
\label{ph05}
\end{align}
With the help of (\ref{ph01}), (\ref{ph04}) and (\ref{ph05}), we get the desired result.
\end{proof}

When $q\to 1$, Theorem \ref{th:q110} is reduced to the following.    

\begin{Cor}  
For $n,r\ge 1$,
\begin{align*}
\sum_{\ell=1}^{n} \ell H_{\ell}^{(r)}&=\frac{n(n+r)}{r+1}H_n^{(r)}-\frac{(n-1)^{(r+1)}}{(r-1)!(r+1)^2}\\
&=\frac{n r}{r+1}H_n^{(r+1)}+\frac{1}{r+1}\binom{n+r}{r+1}\,,
\end{align*}
where $(x)^{(n)}=x(x+1)\cdots(x+n-1)$ ($n\ge 1$) denotes the rising factorial with $(x)^{(0)}=1$. 
\label{th:110}
\end{Cor}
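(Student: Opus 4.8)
The plan is to obtain Corollary~\ref{th:110} by letting $q\to 1$ in Theorem~\ref{th:q110}, carried out termwise. The only ingredients needed are the elementary limit $\lim_{q\to1}[m]_q=m$ recorded in the introduction, the trivial limits $\lim_{q\to1}q^{a}=1$ for every fixed exponent $a$, and the two facts already noted in the text that $\lim_{q\to1}H_n^{(r)}(q)=H_n^{(r)}$ and $\lim_{q\to1}\binom{m}{k}_q=\binom{m}{k}$. Since every sum occurring in (\ref{eq:hq1}) is finite, the limit may be taken inside it.

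First, on the left-hand side of (\ref{eq:hq1}) each summand $q^{\ell-1}[\ell]_q H_\ell^{(r)}(q)$ tends to $\ell H_\ell^{(r)}$, so the left side tends to $\sum_{\ell=1}^{n}\ell H_\ell^{(r)}$. Next, in the first right-hand expression, $[n]_q[n+r]_q/[r+1]_q\to n(n+r)/(r+1)$, $q^r[n-1]_q[n]_q/([r+1]_q)^2\to(n-1)n/(r+1)^2$, and $\binom{n+r-1}{r-1}_q\to\binom{n+r-1}{r-1}$. The one point worth a word is to identify the resulting factor $\frac{(n-1)n}{(r+1)^2}\binom{n+r-1}{r-1}$ with $\frac{(n-1)^{(r+1)}}{(r-1)!(r+1)^2}$; this is immediate from
\begin{align*}
(n-1)n\binom{n+r-1}{r-1}&=(n-1)n\cdot\frac{(n+1)(n+2)\cdots(n+r-1)}{(r-1)!}\\
&=\frac{(n-1)n(n+1)\cdots(n+r-1)}{(r-1)!}=\frac{(n-1)^{(r+1)}}{(r-1)!}\,,
\end{align*}
since $(n-1)^{(r+1)}$ is precisely the product of the $r+1$ consecutive integers from $n-1$ through $n+r-1$. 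Finally, in the second right-hand expression of (\ref{eq:hq1}), $[n]_q[r]_q/[r+1]_q\to nr/(r+1)$, $H_n^{(r+1)}(q)\to H_n^{(r+1)}$, $q^{r-1}/[r+1]_q\to1/(r+1)$, and $\binom{n+r}{r+1}_q\to\binom{n+r}{r+1}$, giving $\frac{nr}{r+1}H_n^{(r+1)}+\frac{1}{r+1}\binom{n+r}{r+1}$.

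There is no genuine obstacle here: the argument is just passage to the limit in a finite identity. The only step calling for even minor attention is the rising-factorial rewriting displayed above, i.e. checking that multiplying $\binom{n+r-1}{r-1}$ (the product of the $r-1$ consecutive integers $n+1,\dots,n+r-1$ divided by $(r-1)!$) by $(n-1)n$ extends the numerator to the $r+1$ consecutive integers $n-1,n,\dots,n+r-1$, which by definition is $(n-1)^{(r+1)}$.
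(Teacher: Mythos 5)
Your proposal is correct and matches the paper's approach: the paper obtains Corollary \ref{th:110} precisely by letting $q\to 1$ in Theorem \ref{th:q110}, exactly as you do. Your explicit verification that $(n-1)n\binom{n+r-1}{r-1}=\frac{(n-1)^{(r+1)}}{(r-1)!}$ is the only nontrivial bookkeeping step, and it is carried out correctly.
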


In order to establish similarly structured theorems of $q$-hyperharmonic numbers, we recall the $q$-Stirling numbers of the second kind, denoted by $S_{q}(n,m)$,  defined by Carlitz (see e.g. \cite{Carlitz}) as 
\begin{align}
([x]_{q})^{n}=\sum_{m=0}^{n} q^{\binom{m}{2}} S_{q}(n,m) ([x]_q)_{(m)}, \quad  (n \in \mathbb N),
\label{qstirling}
\end{align}
where $([x]_q)_{(m)}=[x]_q [x-1]_q \cdots [x-m+1]_q$ denotes the $q$-falling factorial with $([x]_q)_{0}=1$. The $q$-Stirling numbers of the second kind $S_q(n, m)$ satisfy the recurrence relation
$$
S_q(n+1,m)=S_q(n,m-1)+ [m]_q \cdot S_q(n,m)
$$
with boundary values
$$
S_q(n,0)=S_q(0,n)=\delta_{n 0}, \quad  (n \geq 0)
$$
(\cite{Gould}).

We need a $q$-version of the relation by Spie\ss {\cite{Spiess}}, which is essential in the proof of the following structured theorem of $q$-hyperharmonic numbers of type 
$\sum_{\ell=0}^{n} q^{\ell-1} {([\ell]_q)}^{p} H_{\ell}^{(r)}(q)$.

\begin{Lem}
Given summation formulas $\sum_{\ell=0}^{n} q^{\ell-1} \binom{\ell}{j}_q[c_{\ell}]_q=F_q(n,j)$ for $n,j\in\mathbb N$, one has 
$$
\sum_{\ell=0}^{n} q^{\ell-1} {([\ell]_q)}^{p} [c_{\ell}]_q=\sum_{\ell=0}^{p} q^{\binom{\ell}{2}} S_q(p,\ell) \cdot {[\ell]_q}! \cdot F_q(n,\ell)\,.
\label{qspiesslemma}
$$
where $S_q(p,\ell)$ denote the $q$-Stirling numbers of the second kind.
\end{Lem}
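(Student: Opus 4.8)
The plan is to expand $([\ell]_q)^p$ in the $q$-falling factorial basis using Carlitz's definition (\ref{qstirling}), then interchange the order of summation and apply the hypothesis. Concretely, one writes
\[
([\ell]_q)^p=\sum_{j=0}^p q^{\binom{j}{2}}S_q(p,j)\,([\ell]_q)_{(j)},
\]
so that
\[
\sum_{\ell=0}^n q^{\ell-1}([\ell]_q)^p[c_\ell]_q=\sum_{j=0}^p q^{\binom{j}{2}}S_q(p,j)\sum_{\ell=0}^n q^{\ell-1}([\ell]_q)_{(j)}[c_\ell]_q.
\]

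The key step is then the identity $([\ell]_q)_{(j)}=[j]_q!\,\binom{\ell}{j}_q$, which follows directly from the definition of the $q$-binomial coefficient and the $q$-falling factorial once one notes that $[\ell-i]_q$ for $i=0,\dots,j-1$ are exactly the factors $[\ell]_q[\ell-1]_q\cdots[\ell-j+1]_q$ appearing in the numerator of $\binom{\ell}{j}_q=\frac{[\ell]_q!}{[j]_q!\,[\ell-j]_q!}$. Substituting this, pulling the constant $[j]_q!$ out of the inner sum, and recognizing $\sum_{\ell=0}^n q^{\ell-1}\binom{\ell}{j}_q[c_\ell]_q=F_q(n,j)$ by hypothesis yields exactly the claimed formula after renaming the summation index $j$ as $\ell$.

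I do not expect a serious obstacle here; the proof is essentially a basis-change argument, the $q$-analogue of Spie\ss's original trick of writing $\ell^p$ in terms of falling factorials via ordinary Stirling numbers. The one point that merits a line of care is the treatment of the $\ell=0$ term and the lower limits of summation: when $\ell<j$ the $q$-falling factorial $([\ell]_q)_{(j)}$ vanishes (since one of its factors is $[0]_q=0$), which is consistent with $\binom{\ell}{j}_q=0$ in that range, so extending or restricting the inner sum causes no trouble. I would also note explicitly that the identity is purely formal in the sequence $(c_\ell)$, so it applies verbatim once a suitable closed form $F_q(n,j)$ has been established, which is how it will be used to derive the structured theorem for $\sum_{\ell=0}^{n} q^{\ell-1}([\ell]_q)^p H_\ell^{(r)}(q)$.
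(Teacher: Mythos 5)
Your proof is correct and is essentially identical to the paper's: both expand $([\ell]_q)^p$ in the $q$-falling factorial basis via Carlitz's definition, use $([\ell]_q)_{(j)}=[j]_q!\binom{\ell}{j}_q$, interchange the sums, and invoke the hypothesis for $F_q(n,j)$. Your extra remark about the vanishing of terms with $\ell<j$ is a harmless additional point of care not spelled out in the paper.
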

\begin{proof}
Using (\ref{qstirling}), we have
\begin{align*}
\sum_{\ell=0}^{n} q^{\ell-1} {([\ell]_q)}^{p} [c_{\ell}]_q
&=\sum_{\ell=0}^{n} q^{\ell-1} [c_{\ell}]_q \sum_{j=0}^{p} q^{\binom{j}{2}} S_{q}(p,j) \cdot ([\ell]_q)_{(j)}\\
&=\sum_{j=0}^{p} q^{\binom{j}{2}} S_{q}(p,j) {[j]_q}! \sum_{\ell=0}^{n} q^{\ell-1} \binom{\ell}{j}_q [c_{\ell}]_q \\
&=\sum_{j=0}^{p} q^{\binom{j}{2}} S_{q}(p,j) \cdot {[j]_q}! \cdot F_q(n,j)\,.
\end{align*}
\end{proof}

We introduce some notations. 
For $n,r, p \in \mathbb N$, set
\begin{align}
\sum_{\ell=0}^{n} q^{\ell-1} {[\ell]_q}^{p} H_{\ell}^{(r)}(q)=A_q(p,r,n)H_{n}^{(r)}(q)-B_q(p,r,n)\,.\notag
\end{align}  
From (\ref{qh11}), for $p=0$, $A_q(0,r,n)=\frac{[n+r]_q}{[r]_q}, B_q(0,r,n)=\frac{q^{r-1}}{[r]_q}\binom{n+r-1}{r}_q$. 
From Theorem \ref{th:q110}, for $p=1$, we know that 
\begin{align*}
&A_q(1,r,n)=\frac{[n]_q[n+r]_q}{[r+1]_q},\\
&B_q(1,r,n)=\frac{q^r[n-1]_q[n]_q}{([r+1]_q)^2}\binom{n+r-1}{r-1}_q\,.
\end{align*}

\begin{theorem}\label{qhypersturc1}
For $n,r,p\ge 1$,
$$ 
\sum_{\ell=0}^{n} q^{\ell-1} {[\ell]_q}^{p} H_{\ell}^{(r)}(q)=A_q(p,r,n)H_{n}^{(r)}(q)-B_q(p,r,n)\,,
$$ 
where 
\begin{align*}
&A_q(p,r,n)\\
&=\sum_{\ell=0}^{p} q^{\binom{\ell}{2}+p-1} S_q(p,\ell){[\ell]_q!}  \binom{n+r-1}{r-1}_q^{-1} \binom{r+\ell-1}{\ell}_q \binom{r+n}{r+\ell}_q,\\
&B_q(p,r,n)=\sum_{\ell=0}^{p} \frac{q^{\binom{\ell}{2}+r+2p-2}}{[r+\ell]_q}S_q(p,\ell){[\ell]_q!}\binom{r+\ell-1}{\ell}_q\binom{r+n-1}{r+\ell}_q\,.
\end{align*}
\end{theorem}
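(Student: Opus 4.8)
The plan is to apply the $q$-analogue of Spie\ss's relation (Lemma \ref{qspiesslemma}) with $[c_\ell]_q$ taken to be $H_\ell^{(r)}(q)$, so that everything reduces to knowing the base summation $\sum_{\ell=0}^n q^{\ell-1}\binom{\ell}{j}_q H_\ell^{(r)}(q)$ in closed form; I will call this closed form $F_q(n,j)$. The point is that the $q$-Stirling expansion $([x]_q)^p=\sum_{j} q^{\binom{j}{2}}S_q(p,j)([x]_q)_{(j)}$ converts the weight $[\ell]_q^{\,p}$ into a $[j]_q!$-weighted combination of $q$-binomial weights $\binom{\ell}{j}_q$, and Lemma \ref{qspiesslemma} packages exactly that. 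So the whole theorem will follow once $F_q(n,j)$ is computed and shown to have the shape $A_q$-coefficient times $H_n^{(r)}(q)$ minus $B_q$-coefficient, summand by summand in $j$.

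The main computation is therefore the evaluation of $S_j(n):=\sum_{\ell=1}^n q^{\ell-1}\binom{\ell}{j}_q H_\ell^{(r)}(q)$ (the $\ell=0$ term vanishes since $\binom{0}{j}_q=0$ for $j\ge 1$, and for $j=0$ it is the $p=0$ case already recorded before the theorem). The natural route is Abel summation, mirroring the proof of Theorem \ref{th:q110}: using (\ref{ph01}) write $H_\ell^{(r)}(q)=\binom{\ell+r-1}{r-1}_q\bigl(H_{\ell+r-1}(q)-H_{r-1}(q)\bigr)$, absorb the $q$-binomial product $\binom{\ell}{j}_q\binom{\ell+r-1}{r-1}_q$ via a Vandermonde-type identity into a single $q$-binomial coefficient times a ratio of $q$-factorials, and then set $a_\ell$ equal to $q^{\ell-1}$ times that new $q$-binomial, $b_\ell=H_{\ell+r-1}(q)$ in Lemma \ref{abel}. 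The partial sums $s_\ell=\sum_{m\le\ell}a_m$ telescope by the $q$-hockey-stick identity $\sum_{m}q^{m-1}\binom{m+r-1}{r}_q=\binom{\ell+r}{r+1}_q$ (this is exactly the mechanism used to get (\ref{ph04})), and $b_\ell-b_{\ell+1}=-q^{\ell+r-1}/[\ell+r]_q$. Collecting the telescoped main term against $H_{n+r-1}(q)$ and converting back through (\ref{ph01}) produces the $H_n^{(r)}(q)$ piece; the residual single sum $\sum_\ell q^{\ell+r-1}[\ell+r]_q^{-1}\binom{\ell+r}{\cdots}_q$ collapses again by the same hockey-stick, giving the $B_q$ piece. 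Matching the resulting coefficients against the asserted formulas for $A_q(p,r,n)$ and $B_q(p,r,n)$ is then bookkeeping with $q$-binomial/$q$-factorial manipulations, together with the exponent accounting for the overall factor $q^{p-1}$ in $A_q$ and $q^{2p-2}$ in $B_q$ (one $q^{p-1}$ coming out front because $\binom{\ell}{2}+p-1$ has been written relative to $S_q(p,\ell)$; I should double-check this exponent bookkeeping carefully, perhaps by testing $p=1$ against the recorded $A_q(1,r,n),B_q(1,r,n)$ and $p=2$ against Theorem \ref{th:q110}'s companion where available).

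The hard part is the $q$-binomial identity that rewrites $\binom{\ell}{j}_q\binom{\ell+r-1}{r-1}_q$ so that Abel summation telescopes cleanly: in the classical case this is the transparent $\binom{\ell}{j}\binom{\ell+r-1}{r-1}=\binom{\ell+r-1}{r+j-1}\binom{r+j-1}{j}$, but the $q$-version carries a power-of-$q$ discrepancy (a factor like $q^{j(\cdots)}$) that must be tracked precisely, since it feeds directly into the exponents $\binom{\ell}{2}+p-1$ and $\binom{\ell}{2}+r+2p-2$ in the final coefficients. The safest approach is to verify this rewriting from the definition $\binom{a}{b}_q=[a]_q!/([b]_q![a-b]_q!)$ directly, then propagate. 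Once $F_q(n,j)=q^{\text{(something)}}\,[\,\text{coeff}\,]\,H_n^{(r)}(q)-q^{\text{(something)}}\,[\,\text{coeff}\,]$ is in hand with the exponents pinned down, substituting into Lemma \ref{qspiesslemma} and reading off $A_q(p,r,n)=\sum_\ell q^{\binom{\ell}{2}}S_q(p,\ell)[\ell]_q!\cdot(\text{$H_n^{(r)}$-coefficient of }F_q(n,\ell))$ and similarly for $B_q$ finishes the proof, and the stated formulas should fall out after one simplification using $\binom{n+r-1}{r-1}_q^{-1}$ to normalize. Finally I would note that $q\to 1$ recovers the companion Spie\ss-type identities, and that $p=1$ reduces to Theorem \ref{th:q110}, as a consistency check.
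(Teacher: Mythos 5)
Your plan is essentially the paper's own proof: set $[c_\ell]_q=H_\ell^{(r)}(q)$ in Lemma \ref{qspiesslemma}, evaluate $F_q(n,p)$ by expanding through (\ref{ph01}), rewriting $\binom{\ell}{p}_q\binom{\ell+r-1}{r-1}_q=\binom{r+p-1}{p}_q\binom{\ell+r-1}{r+p-1}_q$, and telescoping by Abel summation with the $q$-hockey-stick, which yields exactly the $q^{p-1}$ and $q^{r+2p-2}$ factors you anticipated before converting back via (\ref{ph01}) and reading off $A_q$ and $B_q$. The one point you flagged as delicate is in fact harmless: the trinomial-revision identity is a pure $q$-factorial ratio identity and holds for $q$-binomial coefficients verbatim, with no extra power of $q$.
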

\begin{proof}
Set $[c_{\ell}]_q=H_{\ell}^{(r)}(q)$ in Lemma \ref{qspiesslemma}. Then by using Lemma \ref{abel}, we have
\begin{align}
F_q(n,p)&=\sum_{\ell=0}^{n} q^{\ell-1} \binom{\ell}{p}_{q} H_{\ell}^{(r)}(q) \notag\\
&=\sum_{\ell=1}^{n} q^{\ell-1} \binom{\ell}{p}_{q} \binom{\ell+r-1}{r-1}_{q} (H_{\ell+r-1}(q)-H_{r-1}(q))\notag\\
&=\sum_{\ell=1}^{n} q^{\ell-1} \binom{r+p-1}{p}_{q} \binom{\ell+r-1}{r+p-1}_{q} (H_{\ell+r-1}(q)-H_{r-1}(q))\notag\\
&=q^{p-1} \binom{r+p-1}{p}_q \binom{r+n}{r+p}_q H_{n+r-1}(q)\notag\\
&\quad -q^{p+r-1} \binom{r+p-1}{p}_q \sum_{\ell=1}^{n-1} \frac{q^{\ell-1}}{[\ell+r]_q}\binom{r+\ell}{r+p}_q\notag\\
&\quad \quad-\binom{r+p-1}{p} \binom{r+n}{r+p} H_{r-1}\notag\\
&=q^{p-1} \binom{r+p-1}{p}_q \binom{r+n}{r+p}_q (H_{n+r-1}(q)-H_{r-1}(q))\notag\\
&\quad \quad -\frac{q^{r+2p-2}}{[r+p]_q}\binom{r+p-1}{p}_q \binom{r+n-1}{r+p}_q. \label{ph06}
\end{align}
With the help of (\ref{ph01}) and (\ref{ph06}), Lemma \ref{qspiesslemma} gives the result.
\end{proof}

When $q\to 1$, Theorem \ref{qhypersturc1} is reduced to the following.  

\begin{Cor}\label{hypersturc1}
For $n,r,p\ge 1$,
$$ 
\sum_{\ell=0}^{n} {\ell}^{p} H_{\ell}^{(r)}=A(p,r,n)H_{n}^{(r)}-B(p,r,n)\,,
$$ 
where \begin{align*}
&A(p,r,n)=\sum_{\ell=0}^{p} S(p,\ell){\ell!}{\binom{n+r-1}{r-1}^{-1}\binom{r+\ell-1}{\ell}\binom{r+n}{r+\ell}},\\
&B(p,r,n)=\sum_{\ell=0}^{p} \frac{1}{r+\ell}S(p,\ell){\ell!}\binom{r+\ell-1}{\ell}\binom{r+n-1}{r+\ell}\,.
\end{align*}
\end{Cor}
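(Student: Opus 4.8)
The plan is to obtain Corollary \ref{hypersturc1} as the $q \to 1$ limit of Theorem \ref{qhypersturc1}, which is the mechanism already used for Corollary \ref{th:110}. First I would record the relevant limits: $\lim_{q\to 1}[n]_q = n$, $\lim_{q\to 1}[n]_q! = n!$, $\lim_{q\to 1}\binom{n}{k}_q = \binom{n}{k}$, and $\lim_{q\to 1} S_q(p,\ell) = S(p,\ell)$ (the ordinary Stirling numbers of the second kind), the last of these following from the $q$-recurrence $S_q(n+1,m) = S_q(n,m-1) + [m]_q S_q(n,m)$ reducing to $S(n+1,m) = S(n,m-1) + m\,S(n,m)$ with the same boundary values $\delta_{n0}$. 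Also every power of $q$ appearing as a prefactor — namely $q^{\ell-1}$ on the left-hand side, and $q^{\binom{\ell}{2}+p-1}$, $q^{\binom{\ell}{2}+r+2p-2}$ inside $A_q$ and $B_q$ — tends to $1$. Likewise $\lim_{q\to 1} H_\ell^{(r)}(q) = H_\ell^{(r)}$, which is stated in the excerpt right after \eqref{def:hyperharmonic}.

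Next I would apply these limits term by term. On the left-hand side, $\sum_{\ell=0}^n q^{\ell-1}[\ell]_q^p H_\ell^{(r)}(q) \to \sum_{\ell=0}^n \ell^p H_\ell^{(r)}$, which is the desired left-hand side (the $\ell=0$ term vanishes on both sides for $p\ge 1$, as $0^p=0$, so the sum effectively starts at $\ell=1$). For the coefficient $A_q(p,r,n)$, each summand $q^{\binom{\ell}{2}+p-1} S_q(p,\ell)[\ell]_q!\,\binom{n+r-1}{r-1}_q^{-1}\binom{r+\ell-1}{\ell}_q\binom{r+n}{r+\ell}_q$ converges to $S(p,\ell)\,\ell!\,\binom{n+r-1}{r-1}^{-1}\binom{r+\ell-1}{\ell}\binom{r+n}{r+\ell}$, giving $A(p,r,n)$; here I should note that $\binom{n+r-1}{r-1}_q$ is a nonzero integer-valued polynomial in $q$ with limit $\binom{n+r-1}{r-1}\ne 0$, so the reciprocal passes to the limit without issue. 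For $B_q(p,r,n)$, each summand $\frac{q^{\binom{\ell}{2}+r+2p-2}}{[r+\ell]_q} S_q(p,\ell)[\ell]_q!\,\binom{r+\ell-1}{\ell}_q\binom{r+n-1}{r+\ell}_q$ converges to $\frac{1}{r+\ell}S(p,\ell)\,\ell!\,\binom{r+\ell-1}{\ell}\binom{r+n-1}{r+\ell}$, since $[r+\ell]_q \to r+\ell \ge 1 \ne 0$; summing over $\ell$ gives $B(p,r,n)$. Since all three expressions are finite sums (over $\ell = 0,\dots,p$ or over $\ell$ in a finite range), interchanging the limit with the sum is automatic, and the identity of Theorem \ref{qhypersturc1} passes to the limit, yielding the corollary.

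There is essentially no obstacle here — the only point requiring a word of care is that the denominators $\binom{n+r-1}{r-1}_q$ and $[r+\ell]_q$ appearing in $A_q$ and $B_q$ are nonzero in a punctured neighborhood of $q=1$ and have nonzero limits, so that the quotients are continuous at $q=1$; everything else is a routine application of $\lim_{q\to 1}[m]_q = m$ to a fixed finite sum. In fact, since the statement of the corollary is literally the statement of Theorem \ref{qhypersturc1} with every $q$-bracket $[m]_q$ replaced by $m$, every $q$-binomial by the ordinary binomial, every $S_q$ by $S$, and every stray power of $q$ by $1$, and since $q \to 1$ effects precisely these replacements, the corollary follows immediately. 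An alternative, fully self-contained route would be to rerun the proof of Theorem \ref{qhypersturc1} verbatim at $q=1$ — using the classical Abel identity of Lemma~\ref{abel}, the classical Spie{\ss} relation (the $q\to 1$ case of Lemma~\ref{qspiesslemma}), and the classical identities \eqref{h01}, \eqref{h02} in place of their $q$-analogues — but the limiting argument above is the shortest honest proof and is the one I would present.
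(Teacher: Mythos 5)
Your proposal is correct and matches the paper's approach exactly: the paper obtains this corollary simply by letting $q\to 1$ in Theorem \ref{qhypersturc1}, which is precisely the limiting argument you carry out (with more detail on the convergence of the $q$-brackets, $q$-binomials, and $q$-Stirling numbers than the paper bothers to give).
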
 

\begin{Example}
$p=2$ gives
\begin{align}
&\sum_{\ell=1}^n q^{\ell-1}([\ell]_q)^2 H_\ell^{(r)}(q)\notag\\
&=\frac{[n]_q[n+r]_q(1+q[r+1]_q[n]_q)}{[r+1]_q[r+2]_q}H_n^{(r)}(q)\notag\\
&\quad-q^{r}[n-1]_q[n]_q\binom{n+r-1}{r-1}_q\frac{q[r+1]_q^2[n]_q-q^3[r]_q^2+[2]_q}{[r+1]_q^2[r+2]_q^2}\,.
\label{ph08}
\end{align}
\end{Example}

Note that $[\ell+1]_q=1+q \cdot [\ell]_q$ and $[\ell+2]_q=[2]_q+q^2 \cdot [\ell]_q$. With the help of Theorem \ref{qhypersturc1} and  identities (\ref{eq:hq1}) and (\ref{ph08}), we have the following identities.
For positive integers $n$ and $r$,
\begin{align*}
&\sum_{\ell=1}^n q^{\ell-1}[\ell]_q[\ell+1]_q H_\ell^{(r)}(q)\notag\\
&=\frac{[n]_q[n+r]_q([2]_q[n+2]_q+q^3[r-1]_q[n+1]_q)}{[r+1]_q[r+2]_q}H_n^{(r)}(q)\notag\\
&\quad-q^r[n-1]_q[n]_q\binom{n+r-1}{r-1}_q\frac{[2]_q[r+2]_q^2+q^4[r+1]_q^2[n-2]_q}{[r+1]_q^2[r+2]_q^2}\,.
\end{align*}\
{\scriptsize
\begin{align*}
&\sum_{\ell=1}^nq^{\ell-1}[\ell]_q[\ell+1]_q[\ell+2]_q H_\ell^{(r)}(q)\\
&=\frac{[n]_q[n+r]_q\bigl((r+1)(r+2)n^2+3(r+1)(r+4)n+2(r^2+6 r+11)\bigr)}{[r+1]_q[r+2]_q[r+3]_q}H_n^{(r)}(q)\\
&\quad -q^r[n-1]_q[n]_q\binom{n+r-1}{r-1}_q\frac{(r+1)^2(r+2)^2 n^2+(r+1)^2(r^2+16 r+34)n+12(3 r^2+12 r+11)}{([r+1]_q)^2([r+2]_q)^2([r+3]_q)^2}\,.
\end{align*}
}

To give a more general result, we need the $q$-unsigned Stirling numbers of the first kind $s_{uq}(n,k)$ defined by
$$
[\ell]_q^{(n)}= [\ell]_q [\ell+1]_q\cdots[\ell+n-1]_q=\sum_{k=0}^{n} s_{uq}(n,k) ([\ell]_q)^{k}, \quad  (n \in \mathbb N).
$$
The $q$-unsigned Stirling numbers of the first kind $s_{uq}(n,k)$ are well defined since $[\ell+m]_q=[m]_q+q^{m} \cdot [\ell]_q$. 
\begin{theorem}\label{qharmonicconsec}
For positive integers $n, p$ and $r$,
\begin{align*}
\sum_{\ell=1}^n q^{\ell-1} [\ell]_q^{(p)} H_\ell^{(r)}(q)=A_{1q}(p,r,n)H_{n}^{(r)}-B_{1q}(p,r,n)\,,
\end{align*}
where
\begin{align*}
&A_{1q}(p,r,n)=\sum_{m=0}^p  s_{uq}(p.m) A_{q}(m,r,n),\\
&B_{1q}(p,r,n)=\sum_{m=0}^p  s_{uq}(p.m) B_{q}(m,r,n).\\
\end{align*}
\end{theorem}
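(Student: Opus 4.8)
The plan is to reduce Theorem~\ref{qharmonicconsec} to Theorem~\ref{qhypersturc1} by passing from the rising $q$-factorial $[\ell]_q^{(p)}$ to ordinary powers of $[\ell]_q$. By the very definition of the $q$-unsigned Stirling numbers of the first kind, $[\ell]_q^{(p)}=\sum_{m=0}^{p} s_{uq}(p,m)\,([\ell]_q)^m$, so I would substitute this into the left-hand side and interchange the two finite sums:
$$
\sum_{\ell=1}^n q^{\ell-1}[\ell]_q^{(p)}H_\ell^{(r)}(q)=\sum_{m=0}^{p} s_{uq}(p,m)\sum_{\ell=1}^n q^{\ell-1}([\ell]_q)^m H_\ell^{(r)}(q)\,.
$$

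Next I would evaluate each inner sum using Theorem~\ref{qhypersturc1}. Two small points need attention. First, Theorem~\ref{qhypersturc1} is stated with the index running from $\ell=0$, but for $m\ge 1$ the $\ell=0$ term vanishes (since $[0]_q=0$, and also $H_0^{(r)}(q)=0$ by~(\ref{ph01})), so that $\sum_{\ell=1}^n q^{\ell-1}([\ell]_q)^m H_\ell^{(r)}(q)=A_q(m,r,n)H_n^{(r)}(q)-B_q(m,r,n)$. Second, the $m=0$ summand is harmless: evaluating the defining relation of $s_{uq}$ at $\ell=0$ gives $0=[0]_q[1]_q\cdots[p-1]_q=s_{uq}(p,0)$ for $p\ge 1$, so that term drops out; alternatively one may simply note that the identity of Theorem~\ref{qhypersturc1} also holds for $m=0$ with the values $A_q(0,r,n),B_q(0,r,n)$ recorded after~(\ref{qh11}).

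Finally I would separate the coefficient of $H_n^{(r)}(q)$ from the constant term:
$$
\sum_{m=0}^{p} s_{uq}(p,m)\bigl(A_q(m,r,n)H_n^{(r)}(q)-B_q(m,r,n)\bigr)=\Bigl(\sum_{m=0}^{p} s_{uq}(p,m)A_q(m,r,n)\Bigr)H_n^{(r)}(q)-\sum_{m=0}^{p} s_{uq}(p,m)B_q(m,r,n)\,,
$$
which is exactly $A_{1q}(p,r,n)H_n^{(r)}(q)-B_{1q}(p,r,n)$ by the definitions of $A_{1q}$ and $B_{1q}$.

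There is no genuine obstacle here; once Theorem~\ref{qhypersturc1} is available, this is a linear-combination bookkeeping argument. The only prerequisite worth checking explicitly is that the numbers $s_{uq}(p,m)$ are well defined, i.e.\ that $[\ell]_q^{(p)}$ really is a polynomial of degree $p$ in $[\ell]_q$; this follows from the relation $[\ell+j]_q=[j]_q+q^j[\ell]_q$ noted just before the statement.
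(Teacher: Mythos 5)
Your proposal is correct and follows essentially the same route as the paper: expand $[\ell]_q^{(p)}$ via the $q$-unsigned Stirling numbers of the first kind, swap the finite sums, apply Theorem~\ref{qhypersturc1} termwise, and regroup the coefficients of $H_n^{(r)}(q)$. Your extra remarks about the $\ell=0$ term and the vanishing of $s_{uq}(p,0)$ for $p\ge 1$ are minor tidiness points the paper glosses over, but they do not change the argument.
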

\begin{proof}
\begin{align*}
&\sum_{\ell=1}^n q^{\ell-1} [\ell]_q^{(p)} H_\ell^{(r)}(q)\\ 
&=\sum_{\ell=1}^n q^{\ell-1} \sum_{m=0}^{p} s_{uq}(p,m) {[\ell]_q}^{m}H_\ell^{(r)}(q)\\
&=\sum_{m=0}^{p} s_{uq}(p,m) \sum_{\ell=1}^n q^{\ell-1} {[\ell]_q}^{m}H_\ell^{(r)}(q)\\
&=\sum_{m=0}^{p} s_{uq}(p,m) (A_{q}(m,r,n)H_{n}^{(r)}(q)-B_{q}(m,r,n))\\
&=\left( \sum_{m=0}^p s_{uq}(p,m) A_{q}(m,r,n) \right) H_{n}^{(r)}(q)
-\left( \sum_{m=0}^p s_{uq}(p,m) B_{q}(m,r,n) \right).\\
\end{align*}
\end{proof}

When $q\to 1$, Theorem \ref{qharmonicconsec} is reduced to the following.   
\begin{Cor}\label{harmonicconsec}
For positive integers $n, p$ and $r$,
\begin{align*}
\sum_{\ell=1}^n (\ell)^{(p)}H_\ell^{(r)}=A_{1}(p,r,n)H_{n}^{(r)}-B_{1}(p,r,n)\,,
\end{align*}
where
\begin{align*}
A_{1}(p,r,n)=\sum_{m=0}^p (-1)^{p+m} s(p.m) A(m,r,n),\\
B_{1}(p,r,n)=\sum_{m=0}^p (-1)^{p+m} s(p.m) B(m,r,n).\\
\end{align*}
\end{Cor}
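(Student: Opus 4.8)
The plan is to obtain Corollary \ref{harmonicconsec} as the $q\to 1$ limit of Theorem \ref{qharmonicconsec}, exactly as the phrasing ``When $q\to 1$, Theorem \ref{qharmonicconsec} is reduced to the following'' signals. So the only work is to track what each ingredient in Theorem \ref{qharmonicconsec} becomes in the limit. First I would record the limits $\lim_{q\to 1}[\ell]_q=\ell$, $\lim_{q\to 1}[\ell]_q^{(p)}=\ell(\ell+1)\cdots(\ell+p-1)=(\ell)^{(p)}$ (using the paper's rising-factorial notation from Corollary \ref{th:110}), $\lim_{q\to 1}q^{\ell-1}=1$, and $\lim_{q\to 1}H_\ell^{(r)}(q)=H_\ell^{(r)}$. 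Hence the left-hand side $\sum_{\ell=1}^n q^{\ell-1}[\ell]_q^{(p)}H_\ell^{(r)}(q)$ tends to $\sum_{\ell=1}^n(\ell)^{(p)}H_\ell^{(r)}$, which is the left side of the corollary.

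Next I would handle the two families of coefficients. On the right-hand side of Theorem \ref{qharmonicconsec} we have $A_{1q}(p,r,n)=\sum_{m=0}^p s_{uq}(p,m)A_q(m,r,n)$ and likewise for $B_{1q}$. Since the sum over $m$ is finite, the limit passes inside, so it suffices to identify $\lim_{q\to 1}s_{uq}(p,m)$ and $\lim_{q\to 1}A_q(m,r,n)$, $\lim_{q\to 1}B_q(m,r,n)$. For the first: $s_{uq}(p,m)$ is defined by $[\ell]_q^{(p)}=\sum_{k}s_{uq}(p,k)([\ell]_q)^k$; letting $q\to 1$ gives $x^{(p)}=x(x+1)\cdots(x+p-1)=\sum_k s_{uq}(p,k)\big|_{q=1}x^k$ with $x=\ell$, and since these hold for all $\ell$ they hold as polynomial identities, so $\lim_{q\to 1}s_{uq}(p,m)$ is the coefficient of $x^m$ in the rising factorial $x^{(p)}$, which is the unsigned Stirling number of the first kind — in the signed convention $s(p,m)$ used in the corollary this is $(-1)^{p+m}s(p,m)$ (since $x^{(p)}=\sum_m (-1)^{p-m}s(p,m)x^m$ with $s(p,m)$ the signed first-kind numbers). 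For the second: $A(m,r,n)$ and $B(m,r,n)$ are by definition (Corollary \ref{hypersturc1}) exactly $\lim_{q\to 1}A_q(m,r,n)$ and $\lim_{q\to 1}B_q(m,r,n)$; this limit was already justified when Corollary \ref{hypersturc1} was derived from Theorem \ref{qhypersturc1} (each $q$-binomial and $q$-Stirling number tends to its ordinary counterpart and $q^{\binom{\ell}{2}+\cdots}\to 1$). Combining, $A_{1q}(p,r,n)\to\sum_{m=0}^p(-1)^{p+m}s(p,m)A(m,r,n)=A_1(p,r,n)$ and similarly $B_{1q}(p,r,n)\to B_1(p,r,n)$, which finishes the proof.

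The only genuinely delicate point — and the one I would write out carefully rather than wave at — is the sign/convention bookkeeping for the first-kind Stirling numbers: the $q$-analogue $s_{uq}$ is set up as an unsigned expansion coefficient of a rising factorial in powers of $[\ell]_q$, while the corollary states its limit with the factor $(-1)^{p+m}s(p,m)$, so I must pin down which sign convention for $s(p,m)$ the paper intends (signed first-kind numbers, for which $(-1)^{p+m}s(p,m)\ge 0$ equals the unsigned count) and confirm the identity $\prod_{j=0}^{p-1}(x+j)=\sum_{m=0}^p(-1)^{p+m}s(p,m)x^m$. Everything else is a routine termwise passage to the limit in finite sums, which I would state in one or two lines. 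I would also remark in passing that no convergence subtlety arises because every sum involved ($\sum_{\ell=1}^n$, $\sum_{m=0}^p$) is finite and each summand is a rational function of $q$ that is continuous at $q=1$.
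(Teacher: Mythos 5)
Your proposal is correct and matches the paper's treatment: the paper offers no separate argument for Corollary \ref{harmonicconsec} beyond the remark that it is the $q\to 1$ reduction of Theorem \ref{qharmonicconsec}, which is precisely the termwise limit you carry out. Your careful identification of $\lim_{q\to 1}s_{uq}(p,m)$ with the unsigned first-kind Stirling number $(-1)^{p+m}s(p,m)$ is the right (and only nontrivial) bookkeeping step, and it is handled correctly.
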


\subsection{Backward summations}

Now we consider backward summations of $q$-hyperharmonic numbers.

\begin{theorem}
For positive integers $n$ and $r$,
\begin{multline*} 
\sum_{\ell=1}^n q^{2n-2\ell} [\ell]_q H_{n-\ell}^{(r)}(q)\\
=\frac{[n]_q[n+r]_q}{[r]_q[r+1]_q}H_n^{(r)}(q)
-\binom{n+r}{r+1}_q \left(\frac{q^{r-1}}{[r]_q}+\frac{q^{r}}{[r+1]_q}-\frac{q^{n+r-1}}{[n+r]_q}\right)\,.
\end{multline*}
\label{qbhh1}
\end{theorem}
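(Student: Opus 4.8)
The plan is to turn this backward summation into forward summations that have already been evaluated. First I would substitute $m=n-\ell$; since $H_0^{(r)}(q)=0$, this gives
$$\sum_{\ell=1}^n q^{2n-2\ell}[\ell]_q H_{n-\ell}^{(r)}(q)=\sum_{m=1}^{n-1}q^{2m}[n-m]_q H_m^{(r)}(q).$$
The key elementary fact is the special case $q^m[n-m]_q=[n]_q-[m]_q$ of the relation $[a+b]_q=[a]_q+q^a[b]_q$ (the same relation used at the end of the proof of Theorem \ref{gph01}). Writing $q^{2m}[n-m]_q=q^m[n]_q-q^m[m]_q$ then splits the sum as
$$[n]_q\sum_{m=1}^{n-1}q^m H_m^{(r)}(q)-q\sum_{m=1}^{n-1}q^{m-1}[m]_q H_m^{(r)}(q).$$

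Next I would evaluate each piece. By the defining recurrence (\ref{def:qhyperharmonic}) the first sum is $H_{n-1}^{(r+1)}(q)$; combining $H_n^{(r+1)}(q)=H_{n-1}^{(r+1)}(q)+q^n H_n^{(r)}(q)$ with (\ref{qh11}) and $[n+r]_q=[n]_q+q^n[r]_q$ rewrites it as $\frac{[n]_q}{[r]_q}H_n^{(r)}(q)-\frac{q^{r-1}}{[r]_q}\binom{n+r-1}{r}_q$. For the second sum I would use Theorem \ref{th:q110} after writing $\sum_{m=1}^{n-1}=\sum_{m=1}^{n}-(\text{the }m=n\text{ term})$, the removed term being $q^{n-1}[n]_q H_n^{(r)}(q)$; this keeps everything expressed through $H_n^{(r)}(q)$ and $\binom{n+r-1}{r-1}_q$.

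Substituting these evaluations back produces an expression of the shape $\bigl(\text{coefficient}\bigr)H_n^{(r)}(q)-\bigl(\text{binomial part}\bigr)$, and the remaining work is purely algebraic. For the coefficient of $H_n^{(r)}(q)$ one verifies $\frac{[n]_q}{[r]_q}-\frac{q[n+r]_q}{[r+1]_q}+q^n=\frac{[n+r]_q}{[r]_q[r+1]_q}$ by clearing denominators and applying $[a+b]_q=[a]_q+q^a[b]_q$ a couple of times. For the binomial part one uses $\binom{n+r}{r+1}_q=\frac{[n+r]_q}{[r+1]_q}\binom{n+r-1}{r}_q$ and $\binom{n+r-1}{r-1}_q=\frac{[r]_q}{[n]_q}\binom{n+r-1}{r}_q$ to reduce the two surviving terms to a common scalar multiple of $\binom{n+r-1}{r}_q$, and then checks that this scalar equals the one coming from $\binom{n+r}{r+1}_q\bigl(\frac{q^{r-1}}{[r]_q}+\frac{q^r}{[r+1]_q}-\frac{q^{n+r-1}}{[n+r]_q}\bigr)$. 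I expect this last verification --- matching the two-term $q$-binomial combination against the claimed three-term factor, with careful tracking of the powers of $q$ --- to be the only genuine obstacle; everything else is a direct application of the identities already in the paper.
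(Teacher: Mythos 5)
Your proposal is correct, but it takes a genuinely different route from the paper's. You reverse the index and split $q^{2m}[n-m]_q=q^m[n]_q-q^m[m]_q$, thereby reducing the backward sum to two forward sums: $\sum_{m=1}^{n-1}q^m H_m^{(r)}(q)=H_{n-1}^{(r+1)}(q)$, rewritten via (\ref{qh11}) as $\frac{[n]_q}{[r]_q}H_n^{(r)}(q)-\frac{q^{r-1}}{[r]_q}\binom{n+r-1}{r}_q$, and $\sum_{m=1}^{n-1}q^{m-1}[m]_q H_m^{(r)}(q)$, handled by Theorem \ref{th:q110}. The final verification you defer does go through: the coefficient of $H_n^{(r)}(q)$ follows from $[n]_q+q^n[r]_q=[n+r]_q$ and $[r+1]_q-q[r]_q=1$, and after writing $\binom{n+r-1}{r-1}_q=\frac{[r]_q}{[n]_q}\binom{n+r-1}{r}_q$ and $\binom{n+r}{r+1}_q=\frac{[n+r]_q}{[r+1]_q}\binom{n+r-1}{r}_q$, the two binomial expressions agree using $(1-q)[n]_q=1-q^n$; so there is no gap, only unexecuted but routine algebra. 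The paper instead applies Abel's identity directly to the backward sum with $a_\ell=q^{n-\ell}H_{n-\ell}^{(r)}(q)$ and $b_\ell=q^{n-\ell}[\ell]_q$, which telescopes to the clean intermediate identity $\sum_{\ell=1}^n q^{2n-2\ell}[\ell]_q H_{n-\ell}^{(r)}(q)=H_{n-1}^{(r+2)}(q)$, and then a single application of (\ref{ph01}) produces the stated closed form, the three-term factor arising naturally from $H_{n+r}(q)-H_{r+1}(q)=\bigl(H_{n+r-1}(q)-H_{r-1}(q)\bigr)-\frac{q^{r-1}}{[r]_q}-\frac{q^{r}}{[r+1]_q}+\frac{q^{n+r-1}}{[n+r]_q}$. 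What your route buys is systematic reuse of the already-proved forward summation and order-raising recurrence, at the price of a heavier terminal computation; what the paper's route buys is the structurally informative $q$-analogue of $\sum_\ell \ell H_{n-\ell}^{(r)}=H_{n-1}^{(r+2)}$, from which the closed form drops out with almost no algebra.
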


\begin{proof}
Set $a_{\ell}=q^{n-\ell}{H_{n-\ell}^{(r)}(q)}$, and $b_{\ell}=q^{n-\ell}{[\ell]_q}$. By using Lemma \ref{abel} and $[\ell+1]_q-q[\ell]_q=1$, we have
\begin{align*}
&\sum_{\ell=1}^n q^{2n-2\ell} [\ell]_q H_{n-\ell}^{(r)}(q)\\ 
&=[n]_q \cdot H_{n-1}^{(r+1)}(q)+\sum_{\ell=1}^{n-1}(H_{n-1}^{(r+1)}(q)-H_{n-\ell-1}^{(r+1)}(q))(q^{n-\ell}{[\ell]_q}-q^{n-\ell-1}{[\ell+1]_q})\notag\\
&=[n]_q \cdot H_{n-1}^{(r+1)}(q)+\sum_{\ell=1}^{n-1}H_{n-1}^{(r+1)}(q)(q^{n-\ell}{[\ell]_q}-q^{n-\ell-1}{[\ell+1]_q})\\
&\quad +\sum_{\ell=1}^{n-1}H_{n-\ell-1}^{(r+1)}(q)(-q^{n-\ell}{[\ell]_q}+q^{n-\ell-1}{[\ell+1]_q})\\
&=q^{n-1}H_{n-1}^{(r+1)}(q)+\sum_{\ell=1}^{n-1}q^{n-\ell-1}H_{n-\ell-1}^{(r+1)}(q)\\
&=H_{n-1}^{(r+2)}(q)\,.
\end{align*}
With the help of (\ref{ph01}), we get the desired result.
\end{proof}

When $q\to 1$, Theorem \ref{qbhh1} is reduced to the following. 

\begin{Cor}  
For positive integers $n$ and $r$,
$$
\sum_{\ell=1}^n\ell H_{n-\ell}^{(r)}=\frac{n(n+r)}{r(r+1)}H_n^{(r)}-\frac{(n)^{(r)}\bigl((2 r+1)n+r^2\bigr)}{(r-1)!r^2(r+1)^2}\,.
$$
\label{bhh1}
\end{Cor}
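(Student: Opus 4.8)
The plan is to derive the corollary simply by letting $q\to 1$ in Theorem~\ref{qbhh1}, since every factor occurring there converges. First I would record the elementary limits $\lim_{q\to 1}[m]_q=m$, $\lim_{q\to 1}q^{a}=1$ for any fixed integer $a$, $\lim_{q\to 1}\binom{m}{k}_q=\binom{m}{k}$, and $\lim_{q\to 1}H_{m}^{(r)}(q)=H_{m}^{(r)}$, the last of which follows by induction on $r$ from the definitions~(\ref{def:qhyperharmonic}) and~(\ref{def:hyperharmonic}). Applying these termwise, the left-hand side of Theorem~\ref{qbhh1} becomes $\sum_{\ell=1}^n \ell\,H_{n-\ell}^{(r)}$ (the prefactor $q^{2n-2\ell}$ tending to $1$), and the first summand on the right-hand side becomes $\frac{n(n+r)}{r(r+1)}H_n^{(r)}$ at once.

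The only computation of substance is the limit of the second summand $\binom{n+r}{r+1}_q\bigl(\frac{q^{r-1}}{[r]_q}+\frac{q^{r}}{[r+1]_q}-\frac{q^{n+r-1}}{[n+r]_q}\bigr)$. I would write $\lim_{q\to 1}\binom{n+r}{r+1}_q=\binom{n+r}{r+1}=\frac{(n)^{(r+1)}}{(r+1)!}=\frac{(n+r)\,(n)^{(r)}}{(r+1)!}$, and combine the three scalar fractions $\frac1r+\frac1{r+1}-\frac1{n+r}$ over the common denominator $r(r+1)(n+r)$. The numerator $(r+1)(n+r)+r(n+r)-r(r+1)$ collapses to $(2r+1)n+r^2$, so the limit of the second summand is $\frac{(n+r)(n)^{(r)}}{(r+1)!}\cdot\frac{(2r+1)n+r^2}{r(r+1)(n+r)}$, in which the factor $(n+r)$ cancels.

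Finally I would simplify the remaining denominator: since $(r+1)!=(r+1)r(r-1)!$, we have $(r+1)!\cdot r(r+1)=r^2(r+1)^2(r-1)!$, giving
$$
\frac{(n)^{(r)}\bigl((2r+1)n+r^2\bigr)}{(r-1)!\,r^2(r+1)^2}\,,
$$
which is exactly the subtrahend in the asserted identity; combining this with the first summand completes the proof. There is no real obstacle here: the argument is pure bookkeeping in passing to the limit, and the only point requiring mild attention is the translation between the $q$-binomial coefficient and the rising-factorial notation $(x)^{(n)}=x(x+1)\cdots(x+n-1)$ used in the statement.
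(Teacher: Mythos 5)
Your proposal is correct and follows exactly the paper's route: the corollary is obtained by letting $q\to 1$ in Theorem~\ref{qbhh1}, and your limit computation of the second summand, including the simplification $\frac{1}{r}+\frac{1}{r+1}-\frac{1}{n+r}=\frac{(2r+1)n+r^2}{r(r+1)(n+r)}$ and the translation $\binom{n+r}{r+1}=\frac{(n+r)(n)^{(r)}}{(r+1)!}$, checks out and reproduces the stated right-hand side.
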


It is more complicated to get a summation formula for the backward summations of higher power.  In the case where $q\to 1$, we have more relations, including the following.      

\begin{theorem}
For positive integers $n, p$ and $r$,
\begin{align*}
\sum_{\ell=0}^{n} {\ell}^{p} H_{n-\ell}^{(r)}=A_2(p,r,n)H_{n}^{(r)}-B_2(p,r,n)\,.
\end{align*}
\label{bhh2}
where $A_2(p,r,n)$ and $B_2(p,r,n)$ satisfy the following relations:
\begin{align*}
&A_2(p,r,n)=A_2(0,r,n)\left(1+\sum_{j=0}^{p-1}\binom{p}{j}A_2(j,r+1,n-1)\right),\\
&B_2(p,r,n)\\
&=B_2(0,r,n)\left(1+\sum_{j=0}^{p-1}\binom{p}{j}A_2(j,r+1,n-1)\right)+\sum_{j=0}^{p-1}\binom{p}{j}B_2(j,r+1,n-1)\,,
\end{align*}
with the initial values $A_2(0,r,n)=\frac{n}{r}$ and $B_2(0,r,n)=\frac{1}{r}\binom{n+r-1}{r}$.
\end{theorem}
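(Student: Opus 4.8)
The plan is an induction on $p$ in which the two recursions for $A_2$ and $B_2$ fall out of a single Abel-summation step. Throughout I adopt the conventions $H_m^{(\rho)}=0$ for $m\le 0$ and $\sum_{\ell=1}^{0}(\cdots)=0$, and I write $T(p,r,n):=\sum_{\ell=1}^{n}\ell^{p}H_{n-\ell}^{(r)}$; since $0^{p}=0$ for $p\ge 1$ this agrees with $\sum_{\ell=0}^{n}\ell^{p}H_{n-\ell}^{(r)}$, while the case $p=0$ will be the base of the induction. For that base case, $T(0,r,n)=\sum_{m=0}^{n-1}H_{m}^{(r)}=H_{n-1}^{(r+1)}$ by (\ref{def:hyperharmonic}), and I would rewrite this in the required shape: from (\ref{h01}), $H_{n-1}^{(r+1)}=\binom{n+r-1}{r}(H_{n+r-1}-H_{r})$ and $H_{n}^{(r)}=\binom{n+r-1}{r-1}(H_{n+r-1}-H_{r-1})$, so using $\binom{n+r-1}{r}=\tfrac{n}{r}\binom{n+r-1}{r-1}$ and $H_{r}-H_{r-1}=1/r$ gives
$$
T(0,r,n)=H_{n-1}^{(r+1)}=\frac{n}{r}H_{n}^{(r)}-\frac{1}{r}\binom{n+r-1}{r}=A_{2}(0,r,n)H_{n}^{(r)}-B_{2}(0,r,n)\,,
$$
which matches the stated initial values; this identity will also be used below to convert $H_{n-1}^{(r+1)}$ back into $H_{n}^{(r)}$.

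For the inductive step ($p\ge 1$), I would apply Abel's identity (Lemma \ref{abel}) with $a_{\ell}=H_{n-\ell}^{(r)}$ and $b_{\ell}=\ell^{p}$. The partial sums are $s_{\ell}=\sum_{m=1}^{\ell}H_{n-m}^{(r)}=H_{n-1}^{(r+1)}-H_{(n-1)-\ell}^{(r+1)}$, so Abel's identity gives
$$
T(p,r,n)=H_{n-1}^{(r+1)}n^{p}+\sum_{\ell=1}^{n-1}\bigl(H_{n-1}^{(r+1)}-H_{(n-1)-\ell}^{(r+1)}\bigr)\bigl(\ell^{p}-(\ell+1)^{p}\bigr)\,.
$$
Now the terms carrying $H_{n-1}^{(r+1)}$ collapse through the telescoping $n^{p}+\sum_{\ell=1}^{n-1}(\ell^{p}-(\ell+1)^{p})=1$, and in what remains I substitute $(\ell+1)^{p}-\ell^{p}=\sum_{j=0}^{p-1}\binom{p}{j}\ell^{j}$ and recognise $\sum_{\ell=1}^{n-1}\ell^{j}H_{(n-1)-\ell}^{(r+1)}=T(j,r+1,n-1)$. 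This produces the clean recursion
$$
T(p,r,n)=T(0,r,n)+\sum_{j=0}^{p-1}\binom{p}{j}T(j,r+1,n-1)\,.
$$

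To conclude, I assume the theorem for all exponents $<p$ (and all $r,n$), substitute $T(0,r,n)=A_{2}(0,r,n)H_{n}^{(r)}-B_{2}(0,r,n)$ and $T(j,r+1,n-1)=A_{2}(j,r+1,n-1)H_{n-1}^{(r+1)}-B_{2}(j,r+1,n-1)$ into the recursion above, and then eliminate $H_{n-1}^{(r+1)}$ using the base-case identity. Collecting the coefficient of $H_{n}^{(r)}$ and the constant term then reproduces exactly the stated recursions for $A_{2}(p,r,n)$ and $B_{2}(p,r,n)$, which is the assertion of the theorem.

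The main difficulty is bookkeeping rather than anything deep: one must make sure that the $j=0$ instance of the induction hypothesis genuinely is the $p=0$ base case (it is, because $T(0,\cdot,\cdot)$ denotes the sum from $\ell=1$), and that the small cases, e.g.\ $n=1$ with $T(p,r,1)=0$ and $T(j,r+1,0)=0$, stay consistent with the recursions and the initial data. It is also worth underlining the telescoping identity $n^{p}+\sum_{\ell=1}^{n-1}(\ell^{p}-(\ell+1)^{p})=1$: it is precisely this that forces the coefficient of $H_{n-1}^{(r+1)}$ to be $1$, and hence makes the ``$p=0$'' data the expansion of $H_{n-1}^{(r+1)}$ in terms of $H_{n}^{(r)}$, namely $A_2(0,r,n)=n/r$ and $B_2(0,r,n)=\tfrac1r\binom{n+r-1}{r}$, rather than the expansion of $H_{n}^{(r+1)}$.
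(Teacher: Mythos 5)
Your proposal is correct, and it fills a gap rather than duplicating anything: the paper states Theorem \ref{bhh2} without proof, moving straight on to the next result. Your argument is exactly in the spirit of the paper's own technique for the backward sums (Theorem \ref{qbhh1} and the final theorem), namely Abel's identity with the hyperharmonic factor placed in $a_\ell$, so that the partial sums $s_\ell=H_{n-1}^{(r+1)}-H_{n-\ell-1}^{(r+1)}$ telescope and the remaining piece becomes a sum of the same type with parameters $(r+1,n-1)$ and lower exponents, via $(\ell+1)^p-\ell^p=\sum_{j=0}^{p-1}\binom{p}{j}\ell^j$. Your identification of the initial data is the right one and is the subtle point: the stated values $A_2(0,r,n)=n/r$, $B_2(0,r,n)=\frac1r\binom{n+r-1}{r}$ encode $H_{n-1}^{(r+1)}=\frac{n}{r}H_n^{(r)}-\frac1r\binom{n+r-1}{r}$ (i.e.\ the sum starting at $\ell=1$), not $H_n^{(r+1)}$, and eliminating $H_{n-1}^{(r+1)}$ with this identity reproduces the two stated recursions verbatim; as a cross-check, $p=1$ gives $A_2(1,r,n)=\frac{n(n+r)}{r(r+1)}$ and $B_2(1,r,n)=\binom{n+r-1}{r}\frac{(2r+1)n+r^2}{r(r+1)^2}$, in agreement with Corollary \ref{bhh1}. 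The only residual wrinkle is in the theorem statement itself rather than in your argument: for $n=1$ the recursion formally invokes $A_2(j,r+1,0)$, $B_2(j,r+1,0)$ with $j\ge1$, which the recursion does not define; since $T(j,r+1,0)=0$ and $H_0^{(r+1)}=0$, any convention such as $A_2(j,\rho,0)=B_2(j,\rho,0)=0$ keeps everything consistent, and your explicit flagging of this edge case is appropriate.
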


Nevertheless, we can have a different backward summation formula without weights.

\begin{theorem}
For positive integers $n, p$ and $r$,
$$
\sum_{\ell=1}^n q^{p(n-\ell)} H_{n-\ell}^{(r)}(q)=C_q(p,r,n)H_n^{(r)}(q)-D_q(p,r,n)\,,
$$
where $C_q(p,r,n)$ and $D_q(p,r,n)$ satisfy the following recurrence relation.
\begin{align*}
&C_q(p,r,n)=\frac{[n]_q}{[r]_q}\left(q^{(p-1)(n-1)}+(1-q^{p-1})C_q(p-1,r+1,n-1)\right)\\
&D_q(p,r,n)\\
&=\frac{q^{r-1}[n]_q}{([r]_q)^2}\binom{n+r-1}{r}_q \left(q^{(p-1)(n-1)}+(1-q^{p-1})C_q(p-1,r+1,n-1)\right)\\
&\quad +(1-q^{p-1})D_q(p-1,r+1,n-1).
\end{align*}
\end{theorem}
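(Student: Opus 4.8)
The plan is to argue by induction on $p$, establishing the identity simultaneously for all admissible $r$ and $n$ at each stage; this parallels the proofs of Theorems~\ref{th:q110} and~\ref{qbhh1}. First I reindex by $m=n-\ell$, so that the left-hand side becomes $\sum_{m=0}^{n-1}q^{pm}H_m^{(r)}(q)=\sum_{m=1}^{n-1}q^{pm}H_m^{(r)}(q)$, the $m=0$ term being zero. For the base case $p=1$ this sum is, directly from the definition (\ref{def:qhyperharmonic}), equal to $H_{n-1}^{(r+1)}(q)$. To bring it into the required form I write $H_{n-1}^{(r+1)}(q)=H_n^{(r+1)}(q)-q^nH_n^{(r)}(q)$ (again by (\ref{def:qhyperharmonic})), substitute (\ref{qh11}) for $H_n^{(r+1)}(q)$, and simplify using the identity $[n+r]_q-q^n[r]_q=[n]_q$ already used at the end of the proof of Theorem~\ref{gph01}; this produces the closed forms of $C_q(1,r,n)$ and $D_q(1,r,n)$. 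Note that the coefficient $1-q^{p-1}$ appearing in the recurrence vanishes at $p=1$, so the quantities $C_q(0,\cdot)$ and $D_q(0,\cdot)$ never enter and no base values at $p=0$ are needed.

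For the inductive step I use the telescoping decomposition $q^{m}H_m^{(r)}(q)=H_m^{(r+1)}(q)-H_{m-1}^{(r+1)}(q)$, which is nothing but (\ref{def:qhyperharmonic}). I then apply Abel's identity (Lemma~\ref{abel}) to $\sum_{m=1}^{n-1}q^{pm}H_m^{(r)}(q)$ with
\[
a_\ell=q^{\ell}H_\ell^{(r)}(q),\qquad b_\ell=q^{(p-1)\ell},
\]
so that $a_\ell b_\ell$ is exactly the summand and, by (\ref{def:qhyperharmonic}), $s_\ell=\sum_{j=1}^{\ell}q^{j}H_j^{(r)}(q)=H_\ell^{(r+1)}(q)$. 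Since $b_\ell-b_{\ell+1}=(1-q^{p-1})q^{(p-1)\ell}$, Abel's identity gives
\[
\sum_{m=1}^{n-1}q^{pm}H_m^{(r)}(q)=q^{(p-1)(n-1)}H_{n-1}^{(r+1)}(q)+(1-q^{p-1})\sum_{m=1}^{n-2}q^{(p-1)m}H_m^{(r+1)}(q).
\]
After reindexing back, the remaining sum is precisely the left-hand side of the theorem with $(p,r,n)$ replaced by $(p-1,r+1,n-1)$, so the inductive hypothesis turns it into $C_q(p-1,r+1,n-1)H_{n-1}^{(r+1)}(q)-D_q(p-1,r+1,n-1)$.

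It then remains to re-express $H_{n-1}^{(r+1)}(q)$ in terms of $H_n^{(r)}(q)$ and a $q$-binomial, exactly as in the base case, and to collect: the coefficient of $H_n^{(r)}(q)$ yields the asserted recurrence for $C_q(p,r,n)$, and the part free of $H_n^{(r)}(q)$ yields that for $D_q(p,r,n)$, which closes the induction. The argument is otherwise mechanical; the two places demanding care are the bookkeeping in the reindexing $m=n-\ell$ (checking the endpoint terms and confirming that the reduced sum really is the $(p-1,r+1,n-1)$ instance) and the single $q$-algebraic step $[n+r]_q-q^n[r]_q=[n]_q$ used to pass from $H_{n-1}^{(r+1)}(q)$ to the $H_n^{(r)}(q)$-form. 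This conversion, which dictates the precise shape of $C_q$ and $D_q$, is the natural candidate for the main obstacle.
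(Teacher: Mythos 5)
Your proposal is essentially the paper's own proof: the paper applies Abel's identity directly with $a_\ell=q^{n-\ell}H_{n-\ell}^{(r)}(q)$ and $b_\ell=q^{(p-1)(n-\ell)}$ (your reindexing $m=n-\ell$ is cosmetic), gets the same telescoped reduction to the $(p-1,r+1,n-1)$ instance, and finishes by rewriting $H_{n-1}^{(r+1)}(q)$ in terms of $H_n^{(r)}(q)$ via (\ref{ph01}), which is equivalent to your route through (\ref{def:qhyperharmonic}), (\ref{qh11}) and $[n+r]_q-q^n[r]_q=[n]_q$. One small remark: carrying out that conversion yields $H_{n-1}^{(r+1)}(q)=\frac{[n]_q}{[r]_q}H_n^{(r)}(q)-\frac{q^{r-1}}{[r]_q}\binom{n+r-1}{r}_q$, so the coefficient in front of the bracket in the $D_q$-recurrence comes out as $\frac{q^{r-1}[n]_q}{([r]_q)^2}\binom{n+r-1}{r-1}_q$ (equivalently $\frac{q^{r-1}}{[r]_q}\binom{n+r-1}{r}_q$), indicating that the lower index of the $q$-binomial in the printed statement is a typo rather than a gap in your argument.
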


\begin{proof}
Set $a_{\ell}=q^{n-\ell}{H_{n-\ell}^{(r)}(q)}$ and $b_{\ell}=q^{(p-1)(n-\ell)}$. By using Lemma \ref{abel} and $[\ell+1]_q-q[\ell]_q=1$, we have
\begin{align*}
&\sum_{\ell=1}^n q^{p(n-\ell)} H_{n-\ell}^{(r)}(q)\\ 
&=H_{n-1}^{(r+1)}(q)+\sum_{\ell=1}^{n-1}(H_{n-1}^{(r+1)}(q)-H_{n-\ell-1}^{(r+1)}(q))(q^{(p-1)(n-\ell)}-q^{(p-1)(n-\ell-1)})\notag\\
&=H_{n-1}^{(r+1)}(q)+\sum_{\ell=1}^{n-1}H_{n-1}^{(r+1)}(q)(q^{(p-1)(n-\ell)}-q^{(p-1)(n-\ell-1)})\\
&\quad +\sum_{\ell=1}^{n-1}H_{n-\ell-1}^{(r+1)}(q)(-q^{(p-1)(n-\ell)}+q^{(p-1)(n-\ell-1)})\\
&=q^{(p-1)(n-1)}H_{n-1}^{(r+1)}(q)+(1-q^{p-1})\sum_{\ell=1}^{n-1}q^{(p-1)(n-\ell-1)}H_{n-\ell-1}^{(r+1)}(q)\\
&=q^{(p-1)(n-1)}H_{n-1}^{(r+1)}(q)\\
&\quad +(1-q^{p-1})\left(C_q(p-1,r+1,n-1)H_{n-1}^{(r+1)}(q)-D_q(p-1,r+1,n-1)\right)\,.
\end{align*}
With the help of (\ref{ph01}), we get the desired result.
\end{proof}

\section*{Acknowledgement}  

Authors are grateful to the anonymous referee for helpful comments.


\begin{thebibliography}{99}

\bibitem{Adam}  
V. Adamchik,  {\em  
On Stirling numbers and Euler sums},  
J. Comput. Appl. Math. {\bf 79} (1997), 119--130.  

\bibitem{BGG}
A. T. Benjamin, D. Gaebler and R. Gaebler, {\em
A combinatorial approach to hyperharmonic numbers},
Integers {\bf 3} (2003), A15.

\bibitem{BS}
M. Bah\c{s}i and S. Solak, {\em
An application of hyperharmonic numbers in matrices},
Hacet. J. Math. Stat. {\bf 42} (2013), 387--393.

\bibitem{Carlitz}
L. Carlitz, {\em $q$-Bernoulli numbers and polynomials},
Duke Math. J. {\bf 15} (1948), 987--1000.

\bibitem{Cereceda}
J. L. Cereceda, {\em
An introduction to hyperharmonic numbers},
Internat. J. Math. Ed. Sci. Tech. {\bf 46} (2015), 461--469.

\bibitem{CFZ}
W. Y. C. Chen, A. M. Fu and I. F. Zhang, {\em
Faulhaber's theorem on power sums},
Discrete Math. {\bf 309} (2009), 2974--2981.


\bibitem{Conway}
J. H. Conway, and R. K. Guy, {\em
The Book of Numbers},
Springer, New York (1996).

\bibitem{Gould}  
H. Gould,  {\em  
The $q$-Stirling numbers of the first and second kind},  
Duke Math. J. {\bf 28} (1961), 281--289. 

\bibitem{GZ}
V. J. W. Guo and J. Zeng,  {\em
A $q$-analogue of Faulhaber's formula for sums of powers},
Electron. J. Combin. {\bf 11}(2) (2005), \#R19. 

\bibitem{Knuth}
D. E. Knuth, {\em
The art of computer programming},
Vols. 1-3, Addison-Wesley, Reading, Mass., 1968.

\bibitem{KL}  
T. Komatsu and R. Li, {\em  
Infinite series containing generalized $q$-harmonic numbers}, 
Integers {\bf 21} (2021), Paper No. A1, 13 pp.   

\bibitem{MS}
T. Mansour and M. Shattuck,  {\em
A $q$-analog of the hyperharmonic numbers},
Afr. Mat. {\bf 160} (2014), 147--160.

\bibitem{MD}
I. Mez\H o and A. Dil,  {\em
Hyperharmonic series involving Hurwitz zeta function},
J. Number Theory {\bf 130} (2010), 360--369.

\bibitem{Schlosser}
M. Schlosser,  {\em
$q$-analogues of the sums of consecutive integers, squares, cubes, quarts and quints},
Electron. J. Combin.  {\bf 11} (2004), \#R71.

\bibitem{Spiess}
J. Spie\ss, {\em
Some identities involving harmonic numbers},
Math. Comp. {\bf 55} (1990), 839--863.

\bibitem{Warnaar}
S. O. Warnaar, {\em
On the $q$-analogue of the sum of cubes},
Electron. J. Combin. {\bf 11} (2004), \#N13.

\bibitem{WG}   
C. Wei and Q. Gu, {\em 
$q$-generalizations of a family of harmonic number identities}, 
Adv. in Appl. Math. {\bf 45} (2010), 24--27.  

\end{thebibliography}
\end{document}